\tikzset{> =stealth}
\tikzset{normalHead/.tip={Triangle[open,angle=60:4pt]},}
\tikzset{normalTail/.tip={Triangle[reversed,open,angle=60:4pt]},}
\newcommand{\addQEDstyle}[2]{\AtBeginEnvironment{#1}{\pushQED{\qed}\renewcommand{\qedsymbol}{#2}}\AtEndEnvironment{#1}{\popQED}}
\theoremstyle{plain}
\newtheorem{theorem}{Theorem}[section]
\newtheorem{lemma}[theorem]{Lemma}
\newtheorem{proposition}[theorem]{Proposition}
\newtheorem{corollary}[theorem]{Corollary}
\theoremstyle{definition}
\newtheorem{definition}[theorem]{Definition}
\newtheorem{example}[theorem]{Example}
\theoremstyle{remark}
\newtheorem{remark}[theorem]{Remark}
\renewcommand{\epsilon}{\varepsilon}
\renewcommand{\phi}{\varphi}
\newcommand{\N}{\mathbb{N}}
\newcommand{\Z}{\mathbb{Z}}
\newcommand{\inv}{^{-1}}
\newcommand{\Act}{\mathrm{Act}}
\newcommand{\WAct}{\mathrm{WAct}}
\newcommand{\Mon}{\textbf{Mon}}
\newcommand{\Ab}{\textbf{Ab}}
\newcommand{\Cat}{\mathrm{Cat}}
\newcommand{\Hom}{\mathrm{Hom}}
\newcommand{\Cext}{\mathrm{CExt}}
\DeclareMathOperator{\End}{End}
\newcommand{\splitext}[6]{%
\tikz[baseline]{
\newdimen{\mylabelwidth}
\newdimen{\skipwidth}
\node[anchor=base] (A) {\hspace*{\dimexpr0.5pt-\pgfkeysvalueof{/pgf/inner xsep}}${#1}$};
\settowidth{\mylabelwidth}{\pgfinterruptpicture {$#2$} \endpgfinterruptpicture}
\pgfmathsetlength{\skipwidth}{max(\mylabelwidth,10pt)}
;\node[right] (B) at ([xshift=\skipwidth+12pt]A.east) {${#3}$};
\settowidth{\mylabelwidth}{\pgfinterruptpicture {$#4$} \endpgfinterruptpicture}
\settowidth{\skipwidth}{\pgfinterruptpicture {$#5$} \endpgfinterruptpicture}
\pgfmathsetlength{\skipwidth}{max(\skipwidth,\mylabelwidth,10pt)}
\node[right] (C) at ([xshift=\skipwidth+12pt]B.east) {${#6}$\hspace*{\dimexpr0.5pt-\pgfkeysvalueof{/pgf/inner xsep}}};
\draw[normalTail->] (A) to node [above] {${#2}$} (B);
\draw[transform canvas={yshift=0.5ex},-normalHead] (B) to node [above] {${#4}$} (C);
\draw[transform canvas={yshift=-0.5ex},->] (C) to node [below] {${#5}$} (B);
}}
\newcommand{\normalext}[5]{%
\tikz[baseline]{
\newdimen{\mylabelwidth}
\newdimen{\skipwidth}
\node[anchor=base] (A) {\hspace*{\dimexpr0.5pt-\pgfkeysvalueof{/pgf/inner xsep}}${#1}$};
\settowidth{\mylabelwidth}{\pgfinterruptpicture {$#2$} \endpgfinterruptpicture}
\pgfmathsetlength{\skipwidth}{max(\mylabelwidth,12pt)}
\node[right] (B) at ([xshift=\skipwidth+10pt]A.east) {${#3}$};
\settowidth{\mylabelwidth}{\pgfinterruptpicture {$#4$} \endpgfinterruptpicture}
\pgfmathsetlength{\skipwidth}{max(\mylabelwidth,10pt)}
\node[right] (C) at ([xshift=\skipwidth+10pt]B.east) {${#5}$\hspace*{\dimexpr0.5pt-\pgfkeysvalueof{/pgf/inner xsep}}};
\draw[normalTail->] (A) to node [above] {${#2}$} (B);
\draw[-normalHead] (B) to node [above] {${#4}$} (C);
}}
\title{Quotients of monoid extensions and their interplay with Baer sums}
\author[P. F. Faul]{Peter F. Faul}
\address{Department of Pure Mathematics and Statistical Sciences\\ University of Cambridge}
\email{peter@faul.io}
\author[G. R. Manuell]{Graham R. Manuell}
\email{graham@manuell.me}
\date{\today}
\subjclass[2010]{20M50, 18G50, 20M18}
\keywords{semigroup, weakly Schreier, short five lemma, monoid cohomology}
\begin{document}

\maketitle

\begin{abstract}
 Cosetal extensions of monoids generalise extensions of groups, special Schreier extensions of monoids and Leech's normal extensions of groups by monoids. They share a number of properties with group extensions, including a notion of Baer sum when the kernel is abelian. However, unlike group extensions (with fixed kernel and cokernel) there may be nontrivial morphisms between them. We explore the structure of the category of cosetal extensions and relate it to an analogue of second cohomology groups. Finally, the order structure and additive structures are combined to give an indexed family of inverse semigroups of extensions. These in turn can be combined into an inverse category.
\end{abstract}

\setcounter{section}{-1}
\section{Introduction}

It is well understood that from each extension of groups $\normalext{N}{k}{G}{e}{H}$ with abelian kernel, a unique action $\phi$ can be extracted. These actions then partition the set of isomorphism classes of extensions. Each set in the partition can be shown to be isomorphic to the second cohomology group $\mathcal{H}^2(H,N,\phi)$ whose elements are equivalence classes of factor sets and whose operation corresponds to the Baer sum of extensions. (See \cite{maclane2012homology} for more details.)

The construction of the action above usually makes explicit use of conjugation, which presents an obstacle in generalising these ideas to the context of monoids.

Much work has gone into generalising these ideas to the context of monoids --- usually by restricting to extensions of monoids that behave sufficiently similarly to those of groups.
Such approaches include the monoid extensions of Leech \cite{leech1982extending}, the Schreier extensions in \cite{redei1952verallgemeinerung,tuen1976nonabelianextensions} and the special Schreier extensions of \cite{martins2013semidirect,martins2016baer}.
Only the case of special Schreier extensions with abelian kernel has been shown to proceed as in the classical case described above where an action can be extracted and a second cohomology group can be defined in terms of equivalence classes of factor sets.

Special Schreier extensions are those whose kernel equivalence split extensions are the so-called Schreier split extensions \cite{martins2013semidirect}, which can in turn
be thought of as the split extensions of monoids corresponding to semidirect products. 

In \cite{faul2019characterisation} a notion of a weak semidirect product was introduced and was shown to correspond precisely to the \emph{weakly Schreier split extensions} and are associated to a weak notion of action.
In \cite{faul2020baer} special Schreier extensions were generalised to \emph{cosetal extensions} --- extensions whose corresponding kernel equivalence split extension is only required to be weakly Schreier. This is equivalent to requiring that the extension $\normalext{N}{k}{G}{e}{H}$ satisfy that whenever $e(g) = e(g')$, there exist $n \in N$ such that $g = k(n)g'$.
Cosetal extensions always have group kernel and a weakly Schreier split extension is cosetal if and only if its kernel is a group.
In addition to generalising special Schreier extensions, cosetal extensions also generalise Leech's normal extensions of groups by monoids.

We have recently become aware that cosetal extensions were first studied in a rather dense paper by Fleischer \cite{fleischer1981monoid}, which does not appear to be well known. However, Fleischer does not discuss the Baer sums studied in \cite{faul2020baer}.

For cosetal extensions with abelian kernel a weak action can be extracted. Parameterising by these `actions' allows for the definition of an analogue of the second cohomology groups and a Baer sum of cosetal extensions. 
Unlike the usual actions of groups or monoids, these weak actions form a nontrivial poset. This paper is concerned with the interplay of this poset structure and the second cohomology groups.

\section{Background}
In this section we will discuss the basic theory of weakly Schreier extensions as developed in \cite{faul2019characterisation}, though it is presented here in a slightly different way.
We will also give an overview of the `second cohomology groups' for cosetal extensions with abelian kernel from \cite{faul2020baer}.
(We call these second cohomology groups by analogy to the group case, but do note that a comprehensive cohomology theory has not yet been developed in this context. This is a topic for further research.)

\subsection{Weakly Schreier split extensions}
Between any two monoids $X$ and $Y$ we have the constant $1$ homomorphism which we (somewhat confusingly) write as $0_{X,Y}$ and call the \emph{zero morphism} from $X$ to $Y$. This name is justified by the fact that if $f\colon X' \to X$ and $g\colon Y \to Y'$ we have $0_{X,Y}f = 0_{X',Y}$ and $g0_{X,Y} = 0_{X,Y'}$
and so a zero morphism behaves like $0$ with respect to composition. 

Equalisers and coequalisers exist in the category $\Mon$ of monoids. Thus, for any morphism $f\colon X \to Y$ we can ask for the equaliser of $f$ with $0_{X,Y}$ or the coequaliser of $f$ with $0_{X,Y}$; the former is called the kernel of $f$ and the latter the cokernel.

The kernel of $f$ can be shown to be the inclusion of the submonoid of elements sent by $f$ to $1$. The cokernel of $f$ is the quotient map corresponding to the congruence generated by $f(x) \sim 1$. The existence of kernels and cokernels allows us to define extensions in $\Mon$.

\begin{definition}
    A diagram $\normalext{N}{k}{G}{e}{H}$ of monoids is an \emph{extension} when $k$ is the kernel of $e$ and $e$ is the cokernel of $k$.
    
    A morphism of extensions (of $H$ by $N$) is a homomorphism $f\colon G_1 \to G_2$ making the following diagram commute.
    \begin{center}
    \begin{tikzpicture}[node distance=2.0cm, auto]
    \node (A) {$N$};
    \node (B) [right of=A] {$G_1$};
    \node (C) [right of=B] {$H$};
    \node (D) [below of=A] {$N$};
    \node (E) [right of=D] {$G_2$};
    \node (F) [right of=E] {$H$};
    \draw[normalTail->] (A) to node {$k_1$} (B);
    \draw[-normalHead] (B) to node {$e_1$} (C);
    \draw[normalTail->] (D) to node {$k_2$} (E);
    \draw[-normalHead] (E) to node {$e_2$} (F);
    \draw[->] (B) to node {$f$} (E);
    \draw[double equal sign distance] (A) to (D);
    \draw[double equal sign distance] (C) to (F);
    \end{tikzpicture}
    \end{center}
\end{definition}

A \emph{split extension} is an extension $\normalext{N}{k}{G}{e}{H}$ equipped with a monoid homomorphism $s\colon H \to G$ splitting $e$. A morphism of split extensions is a morphism of the underlying extensions commuting with the sections.

Split extensions of groups are particularly well behaved.
If $\splitext{N}{k}{G}{e}{s}{H}$ is a split extension of groups, then it is the case that for all $g \in G$, there exists a unique $n \in N$ such that $g = k(n)se(g)$. When this condition is imposed on a split extension of monoids we get the \emph{Schreier split extensions} of \cite{bourn2015Sprotomodular}. If we do away with the uniqueness requirement we obtain the weakly Schreier extensions first defined in \cite{bourn2015partialMaltsev}.

\begin{definition}
    A split extension $\splitext{N}{k}{G}{e}{s}{H}$ is \emph{weakly Schreier} if for all $g \in G$ there exists an $n \in N$ such that $g = k(n)se(g)$. 
\end{definition}

There are many naturally occurring examples of weakly Schreier split extensions --- for instance, Artin glueings of frames \cite{sga4vol1,faul2019artin} and $\lambda$-semidirect products of inverse monoids \cite{billhardt1992wreath,faul2020lambda}.

In the category of groups, split extensions between $H$ and $N$ are in one-to-one correspondence with semidirect products of $H$ and $N$ which themselves correspond to actions of $H$ on $N$. The characterisation of weakly Schreier extensions makes extensive use of the ideas behind this correspondence, so let us discuss it now in some depth.

If $\splitext{N}{k}{G}{e}{s}{H}$ is a split extension of groups then the fact that there is a unique $n$ such that $g = k(n)se(g)$ means that the function $f\colon N \times H \to G$, where $f(n,h) = k(n)s(h)$, is a bijection of sets. We would like to equip the set $N \times H$ with a group structure so that $f$ is a morphism of split extensions as in the diagram

\begin{center}
   \begin{tikzpicture}[node distance=2.0cm, auto]
    \node (A) {$N$};
    \node (B) [right of=A] {$N \times H$};
    \node (C) [right of=B] {$H$};
    \node (D) [below of=A] {$N$};
    \node (E) [right of=D] {$G$};
    \node (F) [right of=E] {$H$};
    \draw[normalTail->] (A) to node {$k'$} (B);
    \draw[transform canvas={yshift=0.5ex},-normalHead] (B) to node {$e'$} (C);
    \draw[transform canvas={yshift=-0.5ex},->] (C) to node {$s'$} (B);
    \draw[normalTail->] (D) to node {$k$} (E);
    \draw[transform canvas={yshift=0.5ex},-normalHead] (E) to node {$e$} (F);
    \draw[transform canvas={yshift=-0.5ex},->] (F) to node {$s$} (E);
    \draw[->] (B) to node {$f$} (E);
    \draw[double equal sign distance] (A) to (D);
    \draw[double equal sign distance] (C) to (F);
   \end{tikzpicture}
\end{center}
where $k'(n) = (n,1)$, $e'(n,h) = h$ and $s'(h) = (1,h)$. Since $f(n,h)f(n',h') = k(n)s(h)k(n')s(h')$, we require that $f((n,h)(n',h')) = k(n)s(h)k(n')s(h')$.
Notice that there exists a unique $x \in N$ such that $s(h)k(n) = k(x)s(h)$, namely $x = s(h)k(n)s(h)\inv$.
Denote $x$ by $\alpha(h,n)$ and observe that $f(n\alpha(h,n'),hh') = k(n)k\alpha(h,n')s(h)s(h') = k(n)s(h)k(n')s(h')$. Consequently, if we define $(n,h)(n',h') = (n\alpha(h,n'),hh')$, then $f$ becomes an isomorphism of groups and, in fact, an isomorphism of split extensions. Moreover, the map $\alpha$ is a group action of $H$ on $N$ and all actions give rise to split extensions by the above procedure.

This result generalises to Schreier split extensions of monoids, which are in correspondence with monoid actions of $H$ on $N$. (These actions play the same role as conjugation in the group case.) The situation is not as simple for weakly Schreier split extensions, though we do obtain a correspondence for a suitable weakening of action.

If $\splitext{N}{k}{G}{e}{s}{H}$ is a weakly Schreier extension, then $f\colon (n,h) \mapsto k(n)s(h)$ is no longer a bijection, but only a surjection. Notice however that if $E$ is the equivalence relation on $N \times H$ defined by ${(n,h) \sim (n',h')} \iff k(n)s(h) = k(n')s(h')$, then $f$ factors through $(N \times H)/E$ to give a bijection. As before, we would like for this bijection to be an isomorphism and so we must find a suitable multiplication for $(N \times H)/E$.

Before we do so, let us first discuss the equivalence relation $E$. There are a number of properties $E$ always satisfies, which gives rise to the following definition.

\begin{definition}
    An equivalence relation $E$ on $N \times H$ is \emph{admissible} if 
    \begin{enumerate}[start=0]
        \item $(n,h) \sim (n',h')$ implies $h = h'$,
        \item $(n,1) \sim (n',1)$ implies $n = n'$,
        \item $(n,h) \sim (n',h)$ implies $(xn,h) \sim (xn',h)$ and
        \item $(n,h) \sim (n',h)$ implies $(n,hy) \sim (n',hy)$.
    \end{enumerate}
\end{definition}

The first condition says two pairs will only be related if they agree in the second component. This allows us to reformulate an admissible equivalence relation as a particular $H$-indexed equivalence relation on $N$: each $h \in H$ gives an equivalence relation on $N$ written $n \sim^h n'$.

\begin{definition}\label{def:admissible_indexed_equivalence_relation}
    An $H$-indexed equivalence relation $E$ on $N$ is admissible if
    \begin{enumerate}
        \item $n \sim^1 n'$ implies $n = n'$,
        \item $n \sim^h n'$ implies $xn \sim^h xn'$,
        \item $n \sim^h n'$ implies $n \sim^{hy} n'$.
    \end{enumerate}
\end{definition}

For the remainder of the paper we will make use of this new perspective and consider admissible $H$-indexed equivalence relations on $N$.

For each equivalence relation $\sim^h$ we can quotient $N$ to get $N/{\sim^h}$ whose elements we write as $[n]_h$. It clear that $(N \times H)/E$ is isomorphic to $\bigsqcup_{h \in H}(N/{\sim^h})$ where $[(n,h)]$ corresponds to $([n]_h,h)$. For convenience we will later omit the subscript in the first component, as it is completely determined by the second component.

Now let us return to our weakly Schreier extension $\splitext{N}{k}{G}{e}{s}{H}$. We know that we have a corresponding admissible $H$-indexed equivalence relation $E$ and that $\bigsqcup_{h \in H}(N/{\sim^h})$ is isomorphic to $G$ via $\overline{f}([n],h) = k(n)s(h)$.

Now by the weakly Schreier condition and the axiom of choice, we know that there exists a set map $q \colon G \to N$ such that $g = kq(g)se(g)$ for all $g \in G$.
As before we can define a map $\alpha\colon H \times N \to N$ such that $\alpha(h,n) = q(s(h)k(n))$. Notice that $\overline{f}([n\alpha(h,n')],hh') = k(n)k\alpha(h,n')s(h)s(h') = k(n)s(h)k(n')s(h')$ which implies that $\overline{f}$ is an isomorphism with respect to the multiplication $([n],h)([n'],h') = ([n\alpha(h,n')],hh')$. Furthermore, it is easy to see this gives an isomorphism of extensions. 

We can characterise the functions $\alpha$ arising in this way as follows.
\begin{definition}\label{def:compatible_action}
    A map $\alpha\colon H \times N \to N$ is a \emph{compatible action} with respect to an admissible $H$-indexed equivalence relation $E$ if it satisfies the following conditions.
    \begin{enumerate}
        \item $n \sim^h n'$ implies $n\alpha(h,x) \sim^h n'\alpha(h,x)$ for all $x \in N$,
        \item $n \sim^h n'$ implies $\alpha(x,n) \sim^{xh} \alpha(x,n')$ for all $x \in H$,
        \item $\alpha(h,nn') \sim^h \alpha(h,n)\cdot \alpha(h,n')$,
        \item $\alpha(hh',n) \sim^{hh'} \alpha(h,\alpha(h',n))$,
        \item $\alpha(h,1) \sim^h 1$,
        \item $\alpha(1,n) \sim^1 n$. 
    \end{enumerate}
    We say that the $H$-indexed equivalence relation and the candidate action are \emph{compatible}
    and call $(E, \phi)$ a \emph{compatible pair}.
\end{definition}

Compatible pairs give a full characterisation of weakly Schreier extensions.
\begin{proposition}
    Let $N$ and $H$ be monoids and $(E, \phi)$ be a compatible pair. Then $\bigsqcup_{h \in H}(N/{\sim^h})$ becomes a monoid when equipped with unit $([1],1)$ and a product given by 
    \[
        ([n],h) \cdot ([n'],h') = ([n\alpha(h,n')],hh').
    \]
    We denote this monoid by $N \rtimes_{E,\phi} H$.
\end{proposition}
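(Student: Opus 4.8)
The plan is to verify the monoid axioms directly for the candidate structure. Write $[n]_h$ for the class of $n$ in $N/{\sim^h}$, so the underlying set is $\{([n]_h, h) : h \in H,\ n \in N\}$; recall that by condition~(1) of Definition~\ref{def:admissible_indexed_equivalence_relation} the class $[n]_1$ determines $n$ uniquely, which is what makes $([1],1)$ a sensible unit. Three things must be checked: that the product is well defined (independent of the chosen representatives $n,n'$), that it is associative, and that $([1],1)$ is a two-sided identity.

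For well-definedness, suppose $n \sim^h m$ and $n' \sim^{h'} m'$, and show $n\alpha(h,n') \sim^{hh'} m\alpha(h,m')$. Applying condition~(2) of Definition~\ref{def:compatible_action} to $n' \sim^{h'} m'$ with $x = h$ gives $\alpha(h,n') \sim^{hh'} \alpha(h,m')$, and left translation by $n$ (condition~(2) of Definition~\ref{def:admissible_indexed_equivalence_relation}) yields $n\alpha(h,n') \sim^{hh'} n\alpha(h,m')$. Applying condition~(1) of Definition~\ref{def:compatible_action} to $n \sim^h m$ with $x = m'$ gives $n\alpha(h,m') \sim^{h} m\alpha(h,m')$, which condition~(3) of Definition~\ref{def:admissible_indexed_equivalence_relation} promotes to $n\alpha(h,m') \sim^{hh'} m\alpha(h,m')$; transitivity finishes the step. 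This is where the compatible-action axioms are genuinely used, and I expect it to be the main (though still routine) obstacle.

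For associativity, choose representatives and compute, using associativity of $N$ and $H$ and the just-established well-definedness: the left-associated product is $([\,n\alpha(h,n')\alpha(hh',n'')\,], hh'h'')$ and the right-associated product is $([\,n\alpha(h, n'\alpha(h',n''))\,], hh'h'')$, so it remains to match first components modulo $\sim^{hh'h''}$. Condition~(3) of Definition~\ref{def:compatible_action} gives $\alpha(h, n'\alpha(h',n'')) \sim^{h} \alpha(h,n')\,\alpha(h,\alpha(h',n''))$, and condition~(4) gives $\alpha(h,\alpha(h',n'')) \sim^{hh'} \alpha(hh',n'')$; promoting both to $\sim^{hh'h''}$ by condition~(3) of Definition~\ref{def:admissible_indexed_equivalence_relation}, then left-translating the second by $\alpha(h,n')$ and the combined relation by $n$ via condition~(2) of the same definition, and chaining by transitivity, yields $n\alpha(h, n'\alpha(h',n'')) \sim^{hh'h''} n\alpha(h,n')\alpha(hh',n'')$, as needed.

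Finally, for the unit laws, $([1],1)([n],h) = ([\alpha(1,n)], h)$ and condition~(6) of Definition~\ref{def:compatible_action} gives $\alpha(1,n) \sim^1 n$, promoted to $\sim^h$ by condition~(3) of Definition~\ref{def:admissible_indexed_equivalence_relation}; dually $([n],h)([1],1) = ([n\alpha(h,1)], h)$ and condition~(5) of Definition~\ref{def:compatible_action} together with left translation by $n$ gives $n\alpha(h,1) \sim^h n$. Hence $([1],1)$ is a two-sided unit, and $N \rtimes_{E,\phi} H$ is a monoid.
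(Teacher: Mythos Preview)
Your proof is correct. The paper itself states this proposition without proof, as a background result imported from the earlier paper on weakly Schreier extensions; your direct verification of well-definedness, associativity, and the unit laws using the axioms of an admissible $H$-indexed equivalence relation and a compatible action is exactly the natural (and essentially only) way to establish it.
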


\begin{theorem}
    Let $N$ and $H$ be monoids and $(E,\phi)$ a compatible pair. Then the diagram \[\splitext{N}{k}{N \rtimes_{E,\phi} H}{e}{s}{H}\]
    where $k(n) = ([n],1)$, $e([n],h) = h$ and $s(h) = ([1],h)$ is a weakly Schreier extension. Furthermore, every weakly Schreier extensions is isomorphic to one of this form.
\end{theorem}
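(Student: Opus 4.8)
The plan is to prove the two assertions separately, reducing each to a single easily verified identity. For the first assertion we may take as given --- this is the content of the preceding proposition --- that $G := N \rtimes_{E,\phi} H$ is a monoid; what remains is to check that $k$, $e$, $s$ are homomorphisms with the claimed relationships and that the weakly Schreier condition holds. That $es = \id_H$ and that $e$ is a surjection are immediate from the definitions of $e$ and $s$. The map $k$ is a homomorphism because $k(n)k(n') = ([n\,\alpha(1,n')],1)$ while $\alpha(1,n') \sim^1 n'$ by clause~(6) of \cref{def:compatible_action} and hence $n\,\alpha(1,n') \sim^1 nn'$ by clause~(2) of \cref{def:admissible_indexed_equivalence_relation}; it is injective by clause~(1) of \cref{def:admissible_indexed_equivalence_relation}; and its image is exactly $e\inv(1)$, so $k$ is (isomorphic to) the kernel of $e$. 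A short computation gives $([n],1)([1],h) = ([n],h)$ --- one first notes $\alpha(1,1)\sim^1 1$ and then transports this to index $h$ using clauses~(3) and~(2) of \cref{def:admissible_indexed_equivalence_relation} --- so every $g = ([n],h)$ satisfies $g = k(n)\,se(g)$. This is at once the weakly Schreier condition, with witness $n$, and what is needed to see that $e$ is the cokernel of $k$: modulo the congruence generated by $k(n) \sim 1$ each element becomes $se(g)$, and the resulting quotient is $H$.

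For the second assertion we recall the construction already sketched in the discussion above. Starting from a weakly Schreier extension $\splitext{N}{k}{G}{e}{s}{H}$ we define $n \sim^h n' \iff k(n)s(h) = k(n')s(h)$, which is admissible by a short check (clause~(1) uses injectivity of $k$ together with $s(1) = 1$, while clauses~(2) and~(3) are immediate), we choose a set map $q\colon G \to N$ with $g = kq(g)\,se(g)$ for all $g$ --- possible by the weakly Schreier property and the axiom of choice --- and we set $\alpha(h,n) := q(s(h)k(n))$. The workhorse of the whole argument is the identity
\[ k\alpha(h,n)\, s(h) \;=\; s(h)\, k(n), \]
which holds because $e(s(h)k(n)) = h$ and so, by the defining property of $q$, $s(h)k(n) = kq(s(h)k(n))\,se(s(h)k(n)) = k\alpha(h,n)\,s(h)$. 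Applying the operator $k(-)\,s(h)$ --- or $k(-)\,s(hh')$, etc. --- to both sides of each clause of \cref{def:compatible_action}, and then unwinding the definition of $\sim$, one finds that every clause reduces to a one-line rearrangement of this identity; for instance associativity (clause~(4)) comes down to $k\alpha(hh',n)\,s(hh') = s(h)s(h')k(n) = k\alpha(h,\alpha(h',n))\,s(hh')$. The same method handles the two clauses relating $\alpha$ to $E$, so $(E, \phi)$ is a compatible pair.

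It then remains to produce an isomorphism of split extensions, and we take $\overline f\colon N \rtimes_{E,\phi} H \to G$ given by $\overline f([n],h) = k(n)s(h)$. This is well-defined precisely because $\sim$ was chosen to make it so; it is a homomorphism since the identity above turns $\overline f(([n],h)([n'],h')) = k(n)\,k\alpha(h,n')\,s(h)\,s(h')$ into $k(n)s(h)\,k(n')s(h') = \overline f([n],h)\,\overline f([n'],h')$; it commutes with $k$, $e$ and $s$ by direct substitution; it is surjective because $g = \overline f([q(g)], e(g))$; and it is injective because $k(n)s(h) = k(n')s(h')$ forces $h = h'$ on applying $e$, and then $n \sim^h n'$ by the definition of $\sim$. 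I expect the only real work to be the bookkeeping in checking the compatible-pair axioms: with the identity $k\alpha(h,n)s(h) = s(h)k(n)$ in hand no individual clause is difficult, and the one thing demanding care is that every equation has to be pushed back and forth across the definition of the indexed equivalence relation $E$.
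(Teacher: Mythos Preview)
Your proposal is correct and follows essentially the same approach as the paper. The paper itself does not give a self-contained proof of this theorem---it appears in the background section with the argument only sketched and the details deferred to \cite{faul2019characterisation}---but the sketch given there (defining $E$ via $k(n)s(h)=k(n')s(h)$, choosing $q$ by the weakly Schreier condition, setting $\alpha(h,n)=q(s(h)k(n))$, and using the identity $k\alpha(h,n)s(h)=s(h)k(n)$ to verify that $\overline f([n],h)=k(n)s(h)$ is an isomorphism) is exactly what you carry out in detail.
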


In general there can be many actions compatible with an $H$-indexed equivalence relation $E$ that give rise to the same monoid. This can be remedied by considering equivalence classes of actions where $\phi \sim \phi'$ if and only if $\phi(h,n) \sim^h \phi'(h,n)$ for all $h \in H$ and $n \in N$. This then gives a complete characterisation of weakly Schreier extensions.

A final point worth emphasising regards morphisms between weakly Schreier extensions. In the group setting we have the split short five lemma, which says that a morphism of split extensions is an isomorphism, and moreover, any such isomorphism is unique.
A similar result holds in the generalisation to Schreier split extensions of monoids. In our setting it does not. However, we show below that all morphisms between two such extensions must be unique.

Consider a morphism of split extensions.
\begin{center}
   \begin{tikzpicture}[node distance=2.0cm, auto]
    \node (A) {$N$};
    \node (B) [right=1.2cm of A] {$N \rtimes_{E,\phi} H$};
    \node (C) [right=1.2cm of B] {$H$};
    \node (D) [below of=A] {$N$};
    \node (E) [below of=B] {$N \rtimes_{E',\phi'} H$};
    \node (F) [below of=C] {$H$};
    \draw[normalTail->] (A) to node {$k_1$} (B);
    \draw[transform canvas={yshift=0.5ex},-normalHead] (B) to node {$e_1$} (C);
    \draw[transform canvas={yshift=-0.5ex},->] (C) to node {$s_1$} (B);
    \draw[normalTail->] (D) to node {$k_2$} (E);
    \draw[transform canvas={yshift=0.5ex},-normalHead] (E) to node {$e_2$} (F);
    \draw[transform canvas={yshift=-0.5ex},->] (F) to node {$s_2$} (E);
    \draw[->] (B) to node {$f$} (E);
    \draw[double equal sign distance] (A) to (D);
    \draw[double equal sign distance] (C) to (F);
   \end{tikzpicture}
\end{center}
Since $fk_1 = k_2$ and $fs_1 = s_2$, we have that $f([n],1) = ([n],1)$ and $f([1],h) = ([1],h)$. Now it is not hard to see that $([n],h) = ([n],1)([1],h)$ and so, since $f$ preserves the multiplication, we find that $f([n],h) = ([n],h)$. This unique map will only be well defined when $E \subseteq E'$ and when $\phi(h,n) \sim_{E'}^h \phi'(h,n)$. This makes the category of weakly Schreier extensions a preorder. This is equivalent to a poset $\WAct(H,N)$ whose objects are compatible pairs $(E,[\phi]_E)$. 
Here $(E,[\phi]_E) \le (E',[\phi']_{E'})$ if and only if $E \subseteq E'$ and $[\phi]_{E'} = [\phi']_{E'}$. Notice this means that we can then always rewrite such an inequality as $(E,[\phi]_E) \le (E,[\phi]_{E'})$ with the same representative action.

\subsection{Cosetal extensions and Baer sums}
We now turn our attention to general (non-split) extensions.
In all that follows we assume that the kernel $N$ is abelian group. Let us begin by briefly describing the situation in the category of groups.

For an extension of groups $\normalext{N}{k}{G}{e}{H}$ note that $e(gk(n)g\inv) = 1$ and so there exists an element $\theta(g,n)$ such that $k\theta(g,n) = gk(n)g\inv$.
This map $\theta$ is an action of $G$ on $N$. Now for any set-theoretic section $s$ of $e$ we can define $\phi \colon H \times N \to N$ by $\phi(h,n) = \theta(s(h),n)$.
It turns out that each choice of splitting gives the same map $\phi$, and furthermore, that $\phi$ is a group action of $H$ on $N$. As above, this action satisfies the important identity that $k\phi(h,n)s(h) = s(h)k(n)$.

One can show that if $e(g) = e(g')$, there is a unique $n \in N$ such that $g = k(n)g'$. Now notice that $e(s(hh')) = e(s(h)s(h'))$, so that there exists an $x \in N$ such that $k(x)s(hh') = s(h)s(h')$. Define a map $g_s \colon H \times H \to N$ that chooses these elements so that $kg_s(h,h')s(hh') = s(h)s(h')$. We call $g_s$ the \emph{associated factor set} of $s$.

Observe that $g_s(h,1) = 1 = g_s(1,h)$. Furthermore, we have that $g_s(x,y)g_s(xy,z) = \phi(x,g_s(y,z))g_s(x,yz)$. Any map $g$ satisfying the above equations is called a \emph{factor set}. It
is not hard to see that the set of factor sets is an abelian group under pointwise multiplication.

Now let $\normalext{N}{k}{G}{e}{H}$ be an extension and $s$ an arbitrary set-theoretic section of $e$. Let $\phi$ be the associated action and $g_s$ the factor set associated to $s$. We now define $N \rtimes_\phi^{g_s} H$, a generalisation of the semidirect product whose underlying set is $N \times H$ and with multiplication given by 
\[
    (n,h)(n',h') = (n\phi(h,n')g_s(h,h'),hh').
\]
The function $f\colon N \rtimes_\phi^{g_s} H \to G$ given by $f(n,h) = k(n)s(h)$ can be easily seen to be a bijection. Furthermore, the following calculation shows that it preserves the multiplication.
\begin{align*}
    f((n,h)(n',h'))     &= f(n\phi(h,n')g(h,h'),hh') \\
                        &= k(n)k\phi(h,n')kg(h,h')s(hh') \\
                        &= k(n)\phi(h,n')s(h)s(h') \\
                        &= k(n)s(h)k(n')s(h') \\
                        &= f(n,h)f(n',h')
\end{align*}

If $\phi$ is an action of $H$ on $N$ and $g$ is a factor set, then $N \rtimes_\phi^g H$ can always be made into an extension with associated action $\phi$. Thus, it seems likely that extensions can be characterised by actions and factor sets. The only hurdle is that our above calculation made use of an arbitrary splitting $s$. If instead we chose a different splitting $s'$, we would get a different factor set $g_{s'}$ which would give back the same extension. Therefore, we need to take a quotient of the abelian group of factor sets.

The appropriate subgroup to quotient by is the subgroup of \emph{inner factor sets}: factor sets of the form $\delta t(h,h') = \phi(h,t(h'))t(hh')\inv t(h)$ where $t\colon H \to N$ is any identity preserving function. The resulting quotient group is the second cohomology group $\mathcal{H}^2(H,N,\phi)$, which by the above arguments can be easily seen to correspond to the set of isomorphism classes of extensions with action $\phi$. The bijection endows this set with a natural abelian group structure.

In this paper, we are concerned with cosetal extensions of monoids with abelian kernel.
\begin{definition}
    An extension $\normalext{N}{k}{G}{e}{H}$ of monoids is (right) \emph{cosetal} if whenever $e(g) = e(g')$ there exists an $n \in N$ such that $g = k(n)g'$.
\end{definition}

These are precisely the extensions for which $H$ can be considered to be the monoid of right cosets of $N$ with the usual multiplication. A weakly Schreier split extension is cosetal if and only if it has group kernel.

If $\normalext{N}{k}{G}{e}{H}$ is a cosetal extension and $s$ is a set-theoretic section of $e$, then we can define an $H$-indexed equivalence relation $E$ by $n \sim^h n'$ if and only if $k(n)s(h) = k(n')s(h)$. This will be admissible, and moreover, any other section $s'$ will induce the same $H$-indexed equivalence relation. In this way we can associate a uniquely-defined admissible $H$-indexed equivalence relation to each cosetal extension.

Now note that $e(s(h)) = e(s(h)k(n))$ and thus by the cosetal property there exists an element $x \in N$ such that $k(x)s(h) = s(h)k(n)$. Let us denote such an element $x$ by $\phi(h,n)$.
It can be shown that $\phi\colon H \times N \to N$ is compatible action for the associated equivalence relation $E$.
Note that $\phi$ is not necessarily unique, but any other action constructed in this way will be equivalent to $\phi$ with respect to $E$.

Thus, to each cosetal extension we can associate an admissible equivalence relation $E$ and a compatible class of actions $[\phi]$.
An alternate way of viewing this is that we associate a weak semidirect product is to each cosetal extension, as one might associate a semidirect product to extensions in the group setting.

As in the group case, for each set-theoretic splitting $s$ of $e$ we can consider an associated factor set $g_s$ satisfying $kg_s(h,h')s(hh') = s(h)s(h')$. This motivates our approach to the generalisation of the second cohomology groups.

The isomorphism classes of cosetal extensions can be partitioned according to the value of $(E,[\phi])$ in each case.
Let us call each component $\Cext(H,N,E,[\phi])$.
We can characterise each extension in $\Cext(H,N,E,[\phi])$ with a relaxed notion of a factor set.
This is a function $g \colon H \times H \to N$ such that $g(h,1) \sim^h 1 \sim^h g(1,h)$ and $g(x,y)g(xy,z) \sim^{xyz} \phi(x,g(y,z))g(x,yz)$. Given $(E,[\phi])$ and a factor set $g$ we can construct a monoid $N \rtimes_{E,\phi}^g H$ with underlying set $\bigsqcup_{h \in H}N/{\sim_h}$ and multiplication given by 
\[
    ([n],h)([n'],h') = ([n\phi(h,n')g(h,h')],hh').
\]
We obtain a cosetal extension \normalext{N}{k}{N \rtimes_{E,\phi}^g H}{e}{H} where $k$ and $e$ are the obvious morphisms.

As discussed above, if we start with a cosetal extension $\normalext{N}{k}{G}{e}{H}$ and take an arbitrary section $s$, we can associate to it a factor set $g_s$ and can show that $N \rtimes_{E,\phi}^{g_s} H$ is isomorphic to $G$ and that this can further be extended to an isomorphism of extensions.

The same issue of multiple factor sets giving rise to the same extension occurs and it is again solved by quotienting by an appropriate notion of a inner factor set where we say $g \equiv g'$ if and only if there exists an inner factor set $\delta t$ such that $(\delta t + g')(h,h') \sim^{hh'} g(h,h')$ for all $h, h' \in H$. This gives rise to the second cohomology group $\mathcal{H}^2(H,N,E,[\phi])$ which is then isomorphic to $\Cext(H,N,E,[\phi])$.

\subsection{Relation to Leech and special Schreier extensions}

Let us discuss the relationship between cosetal extensions and other notions of monoid extension found in the literature.
In \cite{leech1982extending} Leech considers extensions of groups by monoids $\normalext{N}{k}{G}{e}{H}$ in which $N$ is a normal subgroup of $G$ in the sense that $N$ is a subgroup of the group of units of $G$ and $gN = Ng$ for all $g \in G$. It is easy to see that every such extension is cosetal. However, the following example demonstrates that not all cosetal extensions are Leech extensions, even when the kernel is an abelian group.

\begin{example}
 Let $\mathbbm{2}=\{\top,\bot\}$ denote the two-element meet-semilattice and consider the action $\alpha\colon \mathbbm{2} \times \Z \to \Z$ defined by $\alpha(\top,n) = n$ and $\alpha(\bot,n) = 0$. We may construct the semidirect product $\Z \rtimes_\alpha \mathbbm{2}$ and the extension $\normalext{\Z}{k}{\Z \rtimes_\alpha \mathbbm{2}}{e}{\mathbbm{2}}$ in which $k(n) = (n,\top)$ and $e(n,x) = x$.
 
 Explicitly, the multiplication in $\Z \rtimes_\alpha \mathbbm{2}$ is given by $(n,\top)\cdot (n',h) = (n+n',h)$ and $(n,\bot)\cdot (n', h) = (n, \bot)$. The right coset $\Z\cdot(0,\bot)$ is then $\Z \times \{\bot\}$, while the left coset $(0,\bot)\cdot\Z$ is $\{(0,\bot)\}$.
 Thus this extension is not a Leech extension, but it is cosetal as it is a (weakly) Schreier split extension with group kernel.
\end{example}

The special Schreier extensions of \cite{martins2016baer} are extensions $\normalext{N}{k}{G}{e}{H}$ such that whenever $e(g) = e(g')$ there exists a \emph{unique} element $n \in N$ such that $k(n)g' = g$. Of course, every special Schreier extension is cosetal.
The following example from \cite{faul2020baer} exhibits a Leech extension, and hence a cosetal extension, which is not special Schreier. Thus, cosetal extensions constitute a nontrivial simultaneous generalization of Leech's extensions of groups by monoids and special Schreier extensions of monoids.

\begin{example}
 Let $\Z_\infty$ be the monoid obtained by adjoining an absorbing element $\infty$ to the integers under addition. Consider the extension $\normalext{\Z}{k}{\Z_\infty}{e}{\mathbbm{2}}$ in which $k(n) = n$ and $e(n) = \top$ for $n \in \Z$ and $e(\infty) = \bot$. Because $Z_\infty$ is commutative, it is clear that this is a Leech extension. However, it is not special Schreier as $n+\infty = \infty$ for all $n \in \Z$ and so uniqueness fails.
\end{example}

\section{Natural morphisms between the second cohomology groups}

Throughout this section, the kernel $N$ will be assumed to be an abelian group.
In the setting of special Schreier extensions there is an assignment of monoid actions $\phi$ of $H$ on $N$ to abelian groups $\mathcal{H}^2(H,N,\phi)$.
Since the set of actions $\Act(H,N)$ can be thought of as a discrete category, this assignment gives rise immediately to a functor $L_{H,N}\colon \Act(H,N) \to \Ab$.

For the theory of cosetal extensions we get an assignment of compatible pairs $(E,[\phi])$ to the abelian groups $\mathcal{H}^2(H,N,E,[\phi])$.
But in this case $\WAct(H,N)$ is a preorder and so functoriality of this assignment is a nontrivial property. In this section we demonstrate that it is indeed functorial and use it investigate the morphisms of cosetal extensions.

\subsection{The functor}\label{sec:the_functor_L}
Suppose that $(E,[\phi]) \leq (E', [\phi])$. We would like to find a relationship between $\mathcal{H}^2(H,N,E,[\phi])$ and $\mathcal{H}^2(H,N,E',[\phi])$.

\begin{proposition}\label{prop:grouphomo}
    If $(E,[\phi]) \leq (E', [\phi])$ then the map $\ell \colon \mathcal{H}^2(H,N,E,[\phi]) \to \mathcal{H}^2(H,N,E',[\phi])$ sending $[g]_E$ to $[g]_{E'}$ is a group homomorphism.
\end{proposition}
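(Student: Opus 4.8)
The plan is to reduce the whole statement to a single monotonicity observation. Since $(E,[\phi]) \le (E',[\phi])$ we have $E \subseteq E'$, and hence $n \sim_E^h n'$ implies $n \sim_{E'}^h n'$ for every $h \in H$; moreover, as part of the hypothesis, $\phi$ is a compatible action for $E'$ as well as for $E$, with $[\phi]_{E'}$ their common class. With this in hand the substantive part is showing that $\ell$ is well-defined, after which the homomorphism property comes essentially for free.

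First I would check well-definedness in two steps. \emph{(i)} If $g\colon H \times H \to N$ is a factor set for $(E,\phi)$, its defining conditions $g(h,1) \sim_E^h 1 \sim_E^h g(1,h)$ and $g(x,y)g(xy,z) \sim_E^{xyz} \phi(x,g(y,z))g(x,yz)$ remain valid when each $\sim_E$ is replaced by $\sim_{E'}$, so $g$ is also a factor set for $(E',\phi)$ and the class $[g]_{E'}$ is meaningful. \emph{(ii)} If $[g]_E = [g_1]_E$, witnessed by an identity-preserving $t\colon H \to N$ with $(\delta t + g_1)(h,h') \sim_E^{hh'} g(h,h')$ for all $h,h'$, then --- since the inner factor set $\delta t$ is built from $\phi$ alone and does not depend on $E$ --- the same $t$ witnesses $(\delta t + g_1)(h,h') \sim_{E'}^{hh'} g(h,h')$, so $[g]_{E'} = [g_1]_{E'}$. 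Thus $[g]_E \mapsto [g]_{E'}$ is a well-defined function $\mathcal{H}^2(H,N,E,[\phi]) \to \mathcal{H}^2(H,N,E',[\phi])$.

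Finally I would use the description of the additive structure: the factor sets for $(E,\phi)$ form an abelian group under pointwise addition (using that $N$ is abelian together with the compatibility conditions of \cref{def:compatible_action}), and $\mathcal{H}^2(H,N,E,[\phi])$ is a quotient of it, so its operation is induced on representatives, $[g]_E + [g_1]_E = [g + g_1]_E$; likewise for $E'$. Since $\ell$ is the identity on representatives and carries the trivial factor set to the trivial factor set,
\[
  \ell([g]_E + [g_1]_E) = \ell([g+g_1]_E) = [g+g_1]_{E'} = [g]_{E'} + [g_1]_{E'} = \ell([g]_E) + \ell([g_1]_E),
\]
so $\ell$ is a group homomorphism. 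I expect the only mildly delicate point to be the bookkeeping in \emph{(i)}--\emph{(ii)} — confirming that every relation occurring in the definitions of ``factor set'' and of the equivalence $\equiv$ is expressed through some $\sim^{hh'}$ and therefore respects $E \subseteq E'$ — together with being explicit that the group operation on each cohomology group is computed representative-wise, so that homomorphy requires no separate argument; beyond that there is no genuine obstacle.
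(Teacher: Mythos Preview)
Your proposal is correct and follows essentially the same approach as the paper: use $E \subseteq E'$ to show that factor sets and the equivalence $\equiv$ transfer from $E$ to $E'$, then observe that the homomorphism property is immediate because $\ell$ acts as the identity on representatives. The paper's proof is slightly terser but structurally identical.
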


\begin{proof}
    Let us begin by showing that $\ell$ is well-defined.
    
    As $g$ is a factor set, we have $g(x,y)g(xy,z) \sim_E^{xyz} \phi(x,g(y,z))g(x,yz)$.
    Now since $E \subseteq E'$, we see that this identity is automatically satisfied with respect to $E'$ and so $g$ is also a factor set with respect to $E'$.
    
    Recall that two factor sets are equivalent in $\mathcal{H}^2(H,N,E,[\phi])$ if there exists an inner factor set $\delta t$ such that $g(h,h') \sim_E^{hh'} (\delta t + g')(h,h')$ for all $h,h' \in H$.
    But since $E \subseteq E'$, this is also satisfied with respect to $E'$. Thus, $g \equiv_{E'} g'$ and the mapping is well defined.
    
    It is clear from the definition that the mapping preserves the identity and addition.
\end{proof}

It is not hard to see that this makes the assignment of compatible pairs to the cohomology groups into a functor.
\begin{definition}
 We obtain a functor $L_{H,N}\colon \WAct(H,N) \to \Ab$ that sends $(E,[\phi])$ to $\mathcal{H}^2(H,N,E,[\phi])$ and yields the morphism $\ell\colon \mathcal{H}^2(H,N,E,[\phi]) \to \mathcal{H}^2(H,N,E',[\phi])$ described above for $(E,[\phi]) \le (E',[\phi])$.
\end{definition}

\subsection{Morphisms of cosetal extensions}
There is the immediate question of whether the extension corresponding to a cohomology class $[g]_E \in \mathcal{H}^2(H,N,E,[\phi])$ is in any way related to the extension corresponding to $\ell([g]) \in \mathcal{H}^2(H,N,E',[\phi])$.

\begin{proposition}\label{prop:canonical_map_between_extensions}
    Suppose $(E,[\phi]) \le (E',[\phi])$ and take $[g] \in \mathcal{H}^2(H,N,E,[\phi])$.
    Consider the extensions $\normalext{N}{k'}{N \rtimes_{E,\phi}^g H}{e'}{H}$ and $\normalext{N}{k}{N \rtimes_{E',\phi}^g H}{e}{H}$ associated $[g]$ and $\ell([g])$ respectively. Then the map $\lambda\colon N \rtimes_{E,\phi}^g H \to N \rtimes_{E',\phi}^{g} H$ defined by $\lambda([n],h) = ([n],h)$ is a morphism of extensions.
\end{proposition}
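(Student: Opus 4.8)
The plan is to verify directly that $\lambda$ is a homomorphism of monoids fitting into the commuting diagram defining a morphism of extensions. The only thing to keep in mind is that $N \rtimes_{E,\phi}^g H$ and $N \rtimes_{E',\phi}^{g} H$ are built from the \emph{same} pair $(\phi, g)$ and differ only in which admissible $H$-indexed equivalence relation is used to form the fibrewise quotients of $N$; thus $\lambda$ is morally just the quotient map $N/{\sim_E^h} \to N/{\sim_{E'}^h}$ applied in each fibre, and almost everything is formal.

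First I would check that $\lambda$ is a well-defined function. Since $(E,[\phi]) \le (E',[\phi])$ gives $E \subseteq E'$, we have that $n \sim_E^h n'$ implies $n \sim_{E'}^h n'$, so the assignment $([n]_E, h) \mapsto ([n]_{E'}, h)$ does not depend on the representative $n$. (Implicit here is that $N \rtimes_{E',\phi}^g H$ is a legitimate monoid and is indeed the extension attached to $\ell([g]) = [g]_{E'}$: the pair $(E',\phi)$ is compatible since $(E',[\phi])$ is an object of $\WAct(H,N)$ by hypothesis, and $g$ is a factor set with respect to $E'$ by the argument already given in the proof of \cref{prop:grouphomo}.)

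Next I would verify that $\lambda$ preserves the unit and the multiplication and commutes with the structure maps. Unit preservation is immediate from $\lambda([1],1) = ([1],1)$. For multiplicativity, fix representatives $n, n'$; then
\[
    \lambda\bigl(([n],h)([n'],h')\bigr) = \lambda\bigl([\,n\phi(h,n')g(h,h')\,],\, hh'\bigr) = \bigl([\,n\phi(h,n')g(h,h')\,]_{E'},\, hh'\bigr),
\]
and this is exactly $\lambda([n],h)\cdot\lambda([n'],h')$ because the product of $N \rtimes_{E',\phi}^g H$ is given by the very same formula on representatives. Finally $\lambda k' = k$, as both send $n$ to $([n],1)$, and $e\lambda = e'$, as both send $([n],h)$ to $h$. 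Hence $\lambda$ is a morphism of extensions.

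I do not anticipate a genuine obstacle here. The single point that requires any care is the well-definedness in the first step --- i.e.\ that passing from the finer quotient to the coarser one is legitimate --- and this is precisely what the inequality $(E,[\phi]) \le (E',[\phi])$ supplies; the remaining verifications are forced by the fact that the two products are literally the same formula read in different quotients. One might also observe, though it is not needed for the statement, that $\lambda$ is the only candidate, since a morphism of these extensions is determined by its behaviour on representatives exactly as for weakly Schreier split extensions.
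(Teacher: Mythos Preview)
Your proof is correct and follows essentially the same approach as the paper: check well-definedness from $E \subseteq E'$, verify preservation of the unit and multiplication by the identical formula on representatives, and conclude with the commutation $\lambda k' = k$ and $e\lambda = e'$. Your additional remarks about why $(E',\phi,g)$ is a legitimate datum and about uniqueness of $\lambda$ are accurate but not needed for the result.
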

\begin{proof}
    This map is well-defined, since $E \subseteq E'$, and it is immediate that the identity $([1],1)$ is preserved. Preservation of multiplication follows from the following trivial calculation.
    \begin{align*}
        \lambda(([n],h)([n'],h')) &= \lambda([n\phi(h,n')g(h,h')],hh') \\
                            &= [n\phi(h,n')g(h,h')],hh') \\
                            &= ([n],h)([n'],h') \\
                            &= \lambda([n],h)\lambda([n'],h')
    \end{align*}
    We then clearly have $\lambda k = k'$ and that $e'\lambda = e$, which completes the proof.
\end{proof}

We can now ask a converse to the above question: if $f$ is a morphism between cosetal extensions, is there any way to relate their associated cohomology classes?

Consider the following morphism of extensions.
\begin{center}
   \begin{tikzpicture}[node distance=2.0cm, auto]
    \node (A) {$N$};
    \node (B) [right of=A] {$G_1$};
    \node (C) [right of=B] {$H$};
    \node (D) [below of=A] {$N$};
    \node (E) [right of=D] {$G_2$};
    \node (F) [right of=E] {$H$};
    \draw[normalTail->] (A) to node {$k_1$} (B);
    \draw[-normalHead] (B) to node {$e_1$} (C);
    \draw[normalTail->] (D) to node {$k_2$} (E);
    \draw[-normalHead] (E) to node {$e_2$} (F);
    \draw[->] (B) to node {$f$} (E);
    \draw[double equal sign distance] (A) to (D);
    \draw[double equal sign distance] (C) to (F);
   \end{tikzpicture}
\end{center}
To find the associated $H$-indexed equivalence relation for $\normalext{N}{k_1}{G_1}{e_1}{H}$, we take an arbitrary set-theoretic splitting $s$ of $e_1$ and define $n \sim^h n'$ if and only if $k_1(n)s(h) = k_1(n')s(h)$.

Notice that if $s$ is such a splitting, then $e_2fs = e_1s = 1$ and so $fs$ is a splitting of $e_2$. This observation allows us to prove many of the results that follow.

\begin{lemma}
    Let $f$ be a morphism of extensions as in the following diagram.
    \begin{center}
       \begin{tikzpicture}[node distance=2.0cm, auto]
        \node (A) {$N$};
        \node (B) [right of=A] {$G_1$};
        \node (C) [right of=B] {$H$};
        \node (D) [below of=A] {$N$};
        \node (E) [right of=D] {$G_2$};
        \node (F) [right of=E] {$H$};
        \draw[normalTail->] (A) to node {$k_1$} (B);
        \draw[-normalHead] (B) to node {$e_1$} (C);
        \draw[normalTail->] (D) to node {$k_2$} (E);
        \draw[-normalHead] (E) to node {$e_2$} (F);
        \draw[->] (B) to node {$f$} (E);
        \draw[double equal sign distance] (A) to (D);
        \draw[double equal sign distance] (C) to (F);
       \end{tikzpicture}
    \end{center}
    If $E_1$ and $E_2$ are the equivalence relations associated to $\normalext{N}{k_1}{G_1}{e_1}{H}$ and $\normalext{N}{k_2}{G_2}{e_2}{H}$ respectively, then $E_1 \subseteq E_2$.
\end{lemma}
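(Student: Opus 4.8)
The plan is to exploit the observation made just before the lemma: if $s$ is a set-theoretic splitting of $e_1$, then $fs$ is a set-theoretic splitting of $e_2$, since $e_2 f s = e_1 s = \id_H$. Combined with the fact (recalled in the background section) that the associated $H$-indexed equivalence relation of a cosetal extension does not depend on the choice of set-theoretic section, this lets us compute both $E_1$ and $E_2$ using \emph{compatible} sections, namely $s$ and $fs$.

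Concretely, I would fix a set-theoretic splitting $s$ of $e_1$ and use it to present $E_1$: $n \sim^h_{E_1} n'$ iff $k_1(n)s(h) = k_1(n')s(h)$. Suppose $n \sim^h_{E_1} n'$. Applying the monoid homomorphism $f$ to the equation $k_1(n)s(h) = k_1(n')s(h)$ and using that $f$ preserves multiplication gives $f(k_1(n))\,f(s(h)) = f(k_1(n'))\,f(s(h))$; then $f k_1 = k_2$ rewrites this as $k_2(n)\,(fs)(h) = k_2(n')\,(fs)(h)$. Since $fs$ is a splitting of $e_2$, this says precisely $n \sim^h_{E_2} n'$ with respect to the equivalence relation computed from the section $fs$, which by section-independence is exactly $E_2$. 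Hence $E_1 \subseteq E_2$.

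There is essentially no hard step here; the only point that needs a moment's care is invoking section-independence of the associated $H$-indexed equivalence relation so that the relation defined from $fs$ really is $E_2$ (rather than merely \emph{some} admissible relation associated to the lower extension). Everything else is a one-line application of $f$ being a homomorphism together with the commutativity conditions $f k_1 = k_2$ and $e_2 f = e_1$.
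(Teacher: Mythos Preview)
Your proof is correct and follows essentially the same approach as the paper: fix a splitting $s$ of $e_1$, apply $f$ to the equation $k_1(n)s(h) = k_1(n')s(h)$, and use $fk_1 = k_2$ together with the fact that $fs$ splits $e_2$ to conclude $n \sim_{E_2}^h n'$. Your explicit mention of section-independence is a reasonable clarification, though the paper leaves this implicit.
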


\begin{proof}
    Let $s$ be a splitting of $e_1$ and suppose that $n \sim_{E_1}^h n'$. This means that $k_1(n)s(h) = k_1(n')s(h)$. Now recall that $fs$ splits $e_2$ and consider the following sequence of equalities.
    \begin{align*}
        k_2(n)fs(h) &= fk_1(n)fs(h) \\
                    &= f(k_1(n)s(h)) \\
                    &= f(k_1(n')s(h)) \\
                    &= k_2(n')fs(h).
    \end{align*}
    Thus, $n \sim_{E_2}^h n'$ and hence $E_1 \subseteq E_2$.
\end{proof}

Using similar ideas we can show that the same action can be extracted from both extensions.

\begin{lemma}
    Let $f$ be a morphism of extensions as in the following diagram.
    \begin{center}
       \begin{tikzpicture}[node distance=2.0cm, auto]
        \node (A) {$N$};
        \node (B) [right of=A] {$G_1$};
        \node (C) [right of=B] {$H$};
        \node (D) [below of=A] {$N$};
        \node (E) [right of=D] {$G_2$};
        \node (F) [right of=E] {$H$};
        \draw[normalTail->] (A) to node {$k_1$} (B);
        \draw[-normalHead] (B) to node {$e_1$} (C);
        \draw[normalTail->] (D) to node {$k_2$} (E);
        \draw[-normalHead] (E) to node {$e_2$} (F);
        \draw[->] (B) to node {$f$} (E);
        \draw[double equal sign distance] (A) to (D);
        \draw[double equal sign distance] (C) to (F);
       \end{tikzpicture}
    \end{center}
    There exists a map $\phi$ such that $\normalext{N}{k_1}{G_1}{e_1}{H}$ belongs to
    $\Cext(H,N,E_1,[\phi])$ and $\normalext{N}{k_2}{G_2}{e_2}{H}$ belongs to $\Cext(H,N,E_2,[\phi])$.
\end{lemma}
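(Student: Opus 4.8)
\emph{Proof proposal.} The plan is to read the action off the \emph{upper} extension and then check that the very same function also works for the lower one. First I would fix a set-theoretic section $s$ of $e_1$; as already observed, $fs$ is then a set-theoretic section of $e_2$. Since $e_1(s(h)) = e_1(s(h)k_1(n))$ for every $h \in H$ and $n \in N$, the cosetal property of $\normalext{N}{k_1}{G_1}{e_1}{H}$ lets us choose, for each pair $(h,n)$, an element $\phi(h,n) \in N$ with $k_1\phi(h,n)s(h) = s(h)k_1(n)$. By the background material on cosetal extensions, the function $\phi$ so obtained is a compatible action for the $H$-indexed equivalence relation induced by $s$ — which is exactly the associated relation $E_1$ — and $\normalext{N}{k_1}{G_1}{e_1}{H}$ lies in $\Cext(H,N,E_1,[\phi])$.

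Next I would transport the defining identity of $\phi$ along $f$. Applying $f$ to $k_1\phi(h,n)s(h) = s(h)k_1(n)$ and using $fk_1 = k_2$ yields
\begin{align*}
 k_2\phi(h,n)\,fs(h) = f\bigl(k_1\phi(h,n)s(h)\bigr) = f\bigl(s(h)k_1(n)\bigr) = fs(h)\,k_2(n).
\end{align*}
Hence $\phi(h,n)$ is, for every pair $(h,n)$, a legitimate choice of the element whose existence is guaranteed by the cosetal property of $\normalext{N}{k_2}{G_2}{e_2}{H}$ relative to the section $fs$. In other words, $\phi$ is one of the candidate actions obtained from the lower extension by the standard extraction procedure, so by the same cited facts it is a compatible action for $E_2$ and $\normalext{N}{k_2}{G_2}{e_2}{H}$ lies in $\Cext(H,N,E_2,[\phi])$. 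Since $E_1 \subseteq E_2$ (by the previous lemma), the class $[\phi]$ appearing in the two statements is consistent, namely the $E_1$-class maps to the $E_2$-class, and the proof is complete.

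The only point requiring care is that the extraction of an action from a cosetal extension involves an arbitrary choice, so a priori the actions read off the two rows need not be literally the same function; the whole content of the argument is the displayed computation, which shows that the concrete $\phi$ produced from the top extension already satisfies the identity defining an extracted action for the bottom extension. Once this is in hand, everything else — that such a function is automatically a compatible action and that it places each extension in the corresponding $\Cext$ component — is exactly what the background on cosetal extensions provides, so no further work is needed.
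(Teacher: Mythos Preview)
Your proposal is correct and follows essentially the same route as the paper: fix a splitting $s$ of $e_1$, extract $\phi$ from the top extension via the identity $k_1\phi(h,n)s(h)=s(h)k_1(n)$, and then apply $f$ (using $fk_1=k_2$ and the fact that $fs$ splits $e_2$) to obtain $k_2\phi(h,n)fs(h)=fs(h)k_2(n)$, which certifies that the same $\phi$ is an extracted action for the bottom extension. Your write-up is a bit more detailed than the paper's, but the argument is the same.
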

\begin{proof}
    Suppose that $s$ splits $e_1$ and that $(E_1,\phi)$ is the compatible pair associated to
    $\normalext{N}{k_1}{G_1}{e_1}{H}$. This means that $k_1\phi(h,n)s(h) = s(h)k_1(n)$. Applying $f$ to both sides yields $fk_1\phi(h,n)fs(h) = fs(h)fk_1(h)$ and hence $k_2\phi(h,n)fs(h) = fs(h)k_2(n)$. This shows that $\phi$ is compatible with $\normalext{N}{k_2}{G_2}{e_2}{H}$ as required.
\end{proof}

Thus, if $f$ is a morphism of extensions, we know that the extensions must correspond to cohomology classes in $\mathcal{H}^2(H,N,E_1,[\phi])$ and $\mathcal{H}^2(H,N,E_2,[\phi])$ with $(E_1,[\phi]) \le (E_2,[\phi])$. But in this situation, \cref{prop:grouphomo} supplies us with a map $\ell\colon \mathcal{H}^2(H,N,E_1,[\phi]) \to \mathcal{H}^2(H,N,E_2,[\phi])$. We now show that $\ell$ must map the one extension to the other.

\begin{lemma}
    Let $f$ be a morphism of extensions as in the following diagram.
    \begin{center}
       \begin{tikzpicture}[node distance=2.0cm, auto]
        \node (A) {$N$};
        \node (B) [right of=A] {$G_1$};
        \node (C) [right of=B] {$H$};
        \node (D) [below of=A] {$N$};
        \node (E) [right of=D] {$G_2$};
        \node (F) [right of=E] {$H$};
        \draw[normalTail->] (A) to node {$k_1$} (B);
        \draw[-normalHead] (B) to node {$e_1$} (C);
        \draw[normalTail->] (D) to node {$k_2$} (E);
        \draw[-normalHead] (E) to node {$e_2$} (F);
        \draw[->] (B) to node {$f$} (E);
        \draw[double equal sign distance] (A) to (D);
        \draw[double equal sign distance] (C) to (F);
       \end{tikzpicture}
    \end{center}
    If $\normalext{N}{k_1}{G_1}{e_1}{H}$ corresponds to $[g]_E \in \mathcal{H}^2(H,N,E_1,[\phi])$, then $\normalext{N}{k_2}{G_2}{e_2}{H}$ corresponds to $\ell([g]_E) \in \mathcal{H}^2(H,N,E_2,[\phi])$.
\end{lemma}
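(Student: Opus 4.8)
The plan is to use the observation recorded just before the lemma --- that $fs$ splits $e_2$ whenever $s$ splits $e_1$ --- and to simply track how an associated factor set is transported along $f$.

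First I would fix an arbitrary set-theoretic section $s$ of $e_1$ and choose an associated factor set $g_s$, so that $k_1(g_s(h,h'))\,s(hh') = s(h)s(h')$ for all $h,h' \in H$. By the construction recalled in the background, $\normalext{N}{k_1}{G_1}{e_1}{H}$ is isomorphic to $N \rtimes_{E_1,\phi}^{g_s} H$ as an extension, so it corresponds to the class $[g_s]_{E_1}$; since by hypothesis it also corresponds to $[g]_E$, we have $[g_s]_{E_1} = [g]_E$, and it suffices to show that $G_2$ corresponds to $\ell([g_s]_{E_1}) = [g_s]_{E_2}$.

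Next I would apply $f$ to the defining equation of $g_s$. Using $f k_1 = k_2$ and the fact that $fs$ splits $e_2$, we obtain
\[
 k_2\bigl(g_s(h,h')\bigr)\,(fs)(hh') = f\bigl(k_1(g_s(h,h'))\,s(hh')\bigr) = f\bigl(s(h)s(h')\bigr) = (fs)(h)\,(fs)(h'),
\]
so $g_s$ is a legitimate choice of associated factor set for the section $fs$ of $e_2$. Hence $\normalext{N}{k_2}{G_2}{e_2}{H}$ is isomorphic to $N \rtimes_{E_2,\phi}^{g_s} H$ and corresponds to $[g_s]_{E_2}$; here I invoke the preceding two lemmas so that $E_2$ and $[\phi]$ are indeed the data associated to $G_2$, and \cref{prop:grouphomo} (together with $E_1 \subseteq E_2$) so that $g_s$ remains a factor set over $E_2$. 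By definition of $\ell$ this class is $\ell([g_s]_{E_1}) = \ell([g]_E)$, which completes the argument.

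I do not anticipate a real obstacle. The two points needing a word of care are: that the cohomology class attached to a cosetal extension is independent of the chosen section, so that we are free to compute the class of $G_2$ using the specific section $fs$; and that the single function $g_s$ serves simultaneously as a factor set over $E_1$ and over $E_2$ --- both of which have already been established. If one wishes to say more, one can check that under the isomorphisms $G_1 \cong N\rtimes_{E_1,\phi}^{g_s}H$ and $G_2 \cong N\rtimes_{E_2,\phi}^{g_s}H$ the morphism $f$ is carried to the canonical map $\lambda$ of \cref{prop:canonical_map_between_extensions}, but this is not needed for the statement.
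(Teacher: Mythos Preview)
Your argument is correct and is essentially the same as the paper's: fix a section $s$ of $e_1$, apply $f$ to the defining identity $k_1 g_s(h,h')\,s(hh') = s(h)s(h')$, and conclude that $g_s$ is also the factor set associated to the section $fs$ of $e_2$. The only difference is that you spell out more of the surrounding justifications (independence from the choice of section, $[g_s]_{E_1}=[g]_E$, etc.), which the paper leaves implicit.
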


\begin{proof}
    Suppose that $s$ is a splitting of $e_1$ and then let $g_s$ be the associated factor set. We will show that $g_s$ is also a factor set associated to the splitting $fs$ of $e_2$.
    
    We know that $k_1g_s(h,h')s(hh') = s(h)s(h')$. Now simply apply $f$ to both sides to get $fk_1g_s(h,h')fs(hh') = fs(h)fs(h')$. This of course gives $k_2g_s(h,h')fs(hh') = fs(h)fs(h')$. This shows that $g_s$ is also an associated factor set of $fs$, which completes the proof.
\end{proof}

Together these results give the following theorem.
\begin{theorem}\label{prop:existence_of_morphisms_of_cosetal_extensions}
        Let $\normalext{N}{k_1}{G_1}{e_1}{H}$ be a cosetal extension corresponding to $[g_1]_{E_1} \in \mathcal{H}^2(H,N,E_1,[\phi_1])$ and let $\normalext{N}{k_2}{G_2}{e_2}{H}$ be a cosetal extension corresponding to $[g_2]_{E_2} \in \mathcal{H}^2(H,N,E_2,[\phi_2])$. Then 
        there exists a morphism $f$ of extensions as in the following diagram
    \begin{center}
       \begin{tikzpicture}[node distance=2.0cm, auto]
        \node (A) {$N$};
        \node (B) [right of=A] {$G_1$};
        \node (C) [right of=B] {$H$};
        \node (D) [below of=A] {$N$};
        \node (E) [right of=D] {$G_2$};
        \node (F) [right of=E] {$H$};
        \draw[normalTail->] (A) to node {$k_1$} (B);
        \draw[-normalHead] (B) to node {$e_1$} (C);
        \draw[normalTail->] (D) to node {$k_2$} (E);
        \draw[-normalHead] (E) to node {$e_2$} (F);
        \draw[->] (B) to node {$f$} (E);
        \draw[double equal sign distance] (A) to (D);
        \draw[double equal sign distance] (C) to (F);
       \end{tikzpicture}
    \end{center}
    if and only if $(E_1,[\phi_1]) \le (E_2,[\phi_2])$ and if  $[g_1]_{E_2} = [g_2]_{E_2}$. 
\end{theorem}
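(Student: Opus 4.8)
The plan is to handle the two implications separately: the forward direction is a matter of assembling the three lemmas just proved, while the backward direction builds the required morphism by splicing \cref{prop:canonical_map_between_extensions} together with an isomorphism coming from the identification $\Cext(H,N,E,[\phi]) \cong \mathcal{H}^2(H,N,E,[\phi])$.

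For the forward direction, suppose $f$ is a morphism of extensions as in the diagram. The first lemma of this subsection gives $E_1 \subseteq E_2$. The second lemma produces a map $\phi$ that is a compatible action for both $E_1$ and $E_2$, with $G_1$ lying in $\Cext(H,N,E_1,[\phi])$ and $G_2$ in $\Cext(H,N,E_2,[\phi])$. Since the compatible class of actions of a cosetal extension is uniquely determined, this forces $[\phi]_{E_1} = [\phi_1]$ and $[\phi]_{E_2} = [\phi_2]$, and together with $E_1 \subseteq E_2$ this is exactly the assertion $(E_1,[\phi_1]) \le (E_2,[\phi_2])$. Finally, the third lemma shows that $G_2$ corresponds to $\ell([g_1]_{E_1}) = [g_1]_{E_2}$; as it also corresponds to $[g_2]_{E_2}$ and the correspondence with $\mathcal{H}^2(H,N,E_2,[\phi])$ is a bijection, we conclude $[g_1]_{E_2} = [g_2]_{E_2}$.

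For the backward direction, assume $(E_1,[\phi_1]) \le (E_2,[\phi_2])$ and $[g_1]_{E_2} = [g_2]_{E_2}$. Unwinding the definition of the order (and using the remark that lets us pick a common representative action), we may take $\phi := \phi_1$, so that $[\phi]_{E_2} = [\phi_2]_{E_2}$ and hence $\mathcal{H}^2(H,N,E_2,[\phi]) = \mathcal{H}^2(H,N,E_2,[\phi_2])$. Since $N$ is abelian and the indexed equivalence relations are admissible, $g_2$ is still a factor set for $\phi$ with respect to $E_2$, and we obtain extension isomorphisms $G_1 \cong N \rtimes_{E_1,\phi}^{g_1} H$ and $G_2 \cong N \rtimes_{E_2,\phi}^{g_2} H$. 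Now \cref{prop:canonical_map_between_extensions} supplies a morphism of extensions $\lambda\colon N \rtimes_{E_1,\phi}^{g_1} H \to N \rtimes_{E_2,\phi}^{g_1} H$, while the hypothesis $[g_1]_{E_2} = [g_2]_{E_2}$ in $\mathcal{H}^2(H,N,E_2,[\phi]) \cong \Cext(H,N,E_2,[\phi])$ yields an isomorphism of extensions $N \rtimes_{E_2,\phi}^{g_1} H \cong N \rtimes_{E_2,\phi}^{g_2} H$. Composing $\lambda$ with this isomorphism and with the isomorphisms $G_1 \cong N \rtimes_{E_1,\phi}^{g_1} H$ and $N \rtimes_{E_2,\phi}^{g_2} H \cong G_2$ gives the desired morphism $f\colon G_1 \to G_2$, since a composite of morphisms of extensions is again one.

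There is very little computation involved; the work is almost entirely bookkeeping with representatives. The step I expect to demand the most care is the reduction in the backward direction to a single shared action $\phi$ and shared factor sets — specifically, checking that a factor set for one representative of an action class remains a factor set for an equivalent representative. This is where abelianness of $N$ and the admissibility axioms get used, transporting the defining relations $\sim^x$ out to $\sim^{xyz}$ and multiplying them through on either side. Everything else is an immediate appeal to \cref{prop:grouphomo}, \cref{prop:canonical_map_between_extensions}, the three lemmas, and the established bijection between $\Cext(H,N,E,[\phi])$ and $\mathcal{H}^2(H,N,E,[\phi])$.
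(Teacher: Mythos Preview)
Your proposal is correct and matches the paper's approach exactly: the paper simply writes ``Together these results give the following theorem'' without further argument, and what you have written is precisely the intended unpacking --- the three lemmas for the forward direction, and \cref{prop:canonical_map_between_extensions} composed with the isomorphisms coming from $\Cext(H,N,E,[\phi]) \cong \mathcal{H}^2(H,N,E,[\phi])$ for the backward direction. Your identification of the one point needing care (that a factor set for one representative action remains a factor set for an equivalent one, via admissibility axioms (2) and (3)) is apt and correctly handled.
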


Notice that this implies that the only morphisms between extensions belonging to $\Cext(H,N,E,[\phi])$ are endomorphisms. In fact, we can show that they are all automorphisms.

\begin{proposition}\label{prop:automorphism}
    If $f$ is an endomorphism of the cosetal extension $\normalext{N}{k}{N \rtimes_{E,\phi}^g H}{e}{H}$, then $f$ is an automorphism.
\end{proposition}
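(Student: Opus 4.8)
The plan is to determine the form of an arbitrary endomorphism $f$ from the two commuting conditions $fk = k$ and $ef = e$, and then to observe that the only thing that could fail — fibrewise surjectivity — in fact holds because $N$ is a group. First I would use $ef = e$: since $e([n],h) = h$, the morphism $f$ preserves the second coordinate, so $f([1],h) = ([t(h)],h)$ for some function $t\colon H \to N$ (obtained by choosing, for each $h$, a representative of the $\sim^h$-class of the first coordinate of $f([1],h)$). From $fk = k$ we get $f([n],1) = ([n],1)$; and since $\sim^1$ is trivial by admissibility, $f([1],1) = ([1],1)$ forces $t(1) = 1$ (this last point is not needed, but is reassuring).

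Next I would exploit the factorisation $([n],h) = ([n],1)\cdot([1],h)$, which holds in $N \rtimes_{E,\phi}^g H$ because $\phi(1,1) \sim^1 1$ (hence $\sim^h 1$ by the admissibility axioms in \cref{def:admissible_indexed_equivalence_relation}) and $g(1,h) \sim^h 1$. Applying $f$ and multiplicativity gives
\[
 f([n],h) = ([n],1)\cdot([t(h)],h) = \bigl([\,n\,\phi(1,t(h))\,g(1,h)\,],\,h\bigr),
\]
and then $\phi(1,t(h)) \sim^1 t(h)$ (by \cref{def:compatible_action}), hence $\phi(1,t(h)) \sim^h t(h)$, together with $g(1,h) \sim^h 1$ and closure of each $\sim^h$ under left multiplication, collapses this to $f([n],h) = ([n\,t(h)],h)$. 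So $f$ is completely determined by $t$ and acts on the fibre over each $h$ by right translation by $t(h)$.

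Finally I would check that $f$ is a bijection, working fibre by fibre. For injectivity, if $f([n],h) = f([n'],h')$ then applying $e$ forces $h = h'$, and $n\,t(h) \sim^h n'\,t(h)$; since $t(h)$ is invertible in the abelian group $N$, multiplying through by $t(h)\inv$ and using admissibility yields $n \sim^h n'$, i.e.\ $([n],h) = ([n'],h)$. For surjectivity, given $([m],h)$ we have $f([\,m\,t(h)\inv\,],h) = ([m],h)$. Hence $f$ is a bijective homomorphism, so an isomorphism, and therefore an automorphism.

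The only genuine obstacle is the surjectivity step: in a general monoid $t(h)$ need not be cancellable, let alone invertible, and the analogous fibrewise translation need not be onto (this is exactly where a non-cosetal or group-free analogue would break down). It is precisely the hypothesis that the kernel $N$ is an abelian group — so that every fibre is acted on simply transitively, and right translations are bijective — that forces an endomorphism to be an automorphism; everything else is a routine unwinding of the defining identities of a compatible pair and a factor set.
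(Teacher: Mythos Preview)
Your proof is correct and follows essentially the same approach as the paper: both use $fk=k$ and $ef=e$ together with the factorisation $([n],h)=k(n)\cdot([1],h)$ to show that $f([n],h)=([t(h)\,n],h)$ for some $t\colon H\to N$, and then invoke invertibility in the abelian group $N$ to conclude bijectivity. The only cosmetic differences are that the paper phrases the factorisation via an arbitrary set-theoretic section $s$ of $e$ (rather than the specific choice $s(h)=([1],h)$), suppresses the routine simplifications of $\phi(1,-)$ and $g(1,-)$ that you spell out, and exhibits the inverse map $([n],h)\mapsto([t(h)^{-1}n],h)$ directly instead of checking injectivity and surjectivity separately.
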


\begin{proof}
    Let $s$ be a splitting of $e$.
    Since $f$ is an endomorphism of extensions, we have $fk = k$ and $ef = e$. Note that $f([n],h) = f(k(n)s(h)) = fk(n)fs(h) = k(n)fs(h)$. Because $ef = e$, we know $efs(h) = h$ and so $fs(h) = [f^*(h),h]$ for some $f^*(h) \in N$. Thus, $f([n],h) = ([f^*(h)n],h)$.
    
    It is now easily seen that $([n],h) \mapsto ([f^*(h)^{-1}n],h)$ provides an inverse function.
\end{proof}

In particular, we arrive at the following parameterised version of the short five lemma.

\begin{theorem}
        If $f$ is a morphism of extensions as in the following diagram
    \begin{center}
       \begin{tikzpicture}[node distance=2.0cm, auto]
        \node (A) {$N$};
        \node (B) [right of=A] {$G_1$};
        \node (C) [right of=B] {$H$};
        \node (D) [below of=A] {$N$};
        \node (E) [right of=D] {$G_2$};
        \node (F) [right of=E] {$H$};
        \draw[normalTail->] (A) to node {$k_1$} (B);
        \draw[-normalHead] (B) to node {$e_1$} (C);
        \draw[normalTail->] (D) to node {$k_2$} (E);
        \draw[-normalHead] (E) to node {$e_2$} (F);
        \draw[->] (B) to node {$f$} (E);
        \draw[double equal sign distance] (A) to (D);
        \draw[double equal sign distance] (C) to (F);
       \end{tikzpicture}
    \end{center}
    where $\normalext{N}{k_1}{G_1}{e_1}{H}$ and $\normalext{N}{k_2}{G_2}{e_2}{H}$ belong to $\Cext(H,N,E,[\phi])$, then $f$ is an isomorphism.
\end{theorem}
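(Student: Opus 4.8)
The plan is to deduce this immediately from the two preceding results together with \cref{prop:existence_of_morphisms_of_cosetal_extensions}. The key observation is that when both extensions lie in $\Cext(H,N,E,[\phi])$, any morphism $f$ between them is forced to be an endomorphism of a single extension up to isomorphism, and \cref{prop:automorphism} tells us endomorphisms of cosetal extensions are automorphisms.

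First I would unwind the hypothesis. By the general theory recalled in the background, $\normalext{N}{k_1}{G_1}{e_1}{H}$ is isomorphic (as an extension) to $\normalext{N}{k'}{N \rtimes_{E,\phi}^{g_1} H}{e'}{H}$ for some factor set $g_1$ with $[g_1]_E \in \mathcal{H}^2(H,N,E,[\phi])$, and similarly $\normalext{N}{k_2}{G_2}{e_2}{H}$ is isomorphic to $\normalext{N}{k}{N \rtimes_{E,\phi}^{g_2} H}{e}{H}$ with $[g_2]_E \in \mathcal{H}^2(H,N,E,[\phi])$. Transporting $f$ along these isomorphisms of extensions yields a morphism of extensions between these two standard representatives; since isomorphisms of extensions are in particular isomorphisms of monoids, it suffices to prove the transported morphism is an isomorphism. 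Next, by \cref{prop:existence_of_morphisms_of_cosetal_extensions}, the mere existence of a morphism between them forces $[g_1]_{E} = [g_1]_{E_2} = [g_2]_{E_2} = [g_2]_E$ (here $E_1 = E_2 = E$ since both equivalence relations are the one associated to the respective extension, namely $E$). Hence $N \rtimes_{E,\phi}^{g_1} H$ and $N \rtimes_{E,\phi}^{g_2} H$ are isomorphic as extensions; fixing such an isomorphism $\psi$ and composing, we obtain an \emph{endo}morphism of the single cosetal extension $\normalext{N}{k}{N \rtimes_{E,\phi}^{g_2} H}{e}{H}$. By \cref{prop:automorphism}, this endomorphism is an automorphism, and unravelling the composition shows the original $f$ was an isomorphism.

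Alternatively, and perhaps more cleanly, one can argue directly without invoking \cref{prop:existence_of_morphisms_of_cosetal_extensions}: by the two lemmas immediately preceding that theorem, the associated equivalence relation of $G_1$ is contained in that of $G_2$, but both equal $E$ by hypothesis, so $f$ induces the identity on the level of $E$; and $G_1$, $G_2$ both correspond to cohomology classes in the \emph{same} group $\mathcal{H}^2(H,N,E,[\phi])$ with $\ell$ the identity, so the third lemma forces the classes to agree. Then the computation in the proof of \cref{prop:automorphism} applies verbatim: writing $fs(h) = ([f^*(h)],h)$ for a splitting $s$, one has $f([n],h) = ([f^*(h)n],h)$ and the map $([n],h) \mapsto ([f^*(h)^{-1}n],h)$ is a two-sided inverse (this uses that $N$ is an abelian group, so $f^*(h)^{-1}$ exists and the relevant products commute past the $\sim^h$-classes in the way required).

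The main obstacle, such as it is, is bookkeeping rather than mathematics: one must be careful that the equivalence relation and action associated to an extension really are invariant under isomorphism of extensions (stated in the background) so that passing to standard representatives does not change $(E,[\phi])$, and one must check that the inverse function exhibited in \cref{prop:automorphism} is genuinely well-defined on the quotients $N/{\sim^h}$ and is a monoid homomorphism — but both of these are exactly the verifications already carried out in the proof of \cref{prop:automorphism}, so nothing new is needed. I would therefore keep the proof to a few lines, citing \cref{prop:automorphism} and \cref{prop:existence_of_morphisms_of_cosetal_extensions} (or the three lemmas) and noting that the case $E_1 = E_2 = E$, $[\phi_1] = [\phi_2] = [\phi]$ reduces a morphism of extensions in $\Cext(H,N,E,[\phi])$ to an endomorphism of a single one.
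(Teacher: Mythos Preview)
Your proposal is correct and matches the paper's approach exactly: the paper states the theorem as an immediate consequence of \cref{prop:existence_of_morphisms_of_cosetal_extensions} and \cref{prop:automorphism} (indeed, it gives no separate proof, introducing it only with ``In particular, we arrive at the following\ldots''), and your argument --- reduce to the same cohomology class, then treat the morphism as an endomorphism and apply \cref{prop:automorphism} --- is precisely this reduction spelled out.
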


We can extend the ideas in \cref{prop:automorphism} to give a full characterisation of the automorphisms of extensions in terms of an obvious generalisation of \emph{crossed homomorphisms} from the theory of group extensions.

Let $\normalext{N}{k}{N \rtimes_{E,\phi}^g H}{e}{H}$ belong to $\Cext(H,N,E,[\phi])$. 
Notice that any automorphism $f$ must send $([n],h)$ to $([f^*(h)n],h)$, where $f^*(h)\colon H \to N$. We then ask for which functions $t^* \colon H \to N$ the map $t([n],h) = ([t^*(h)n],h)$ is a homomorphism.
For $t$ to preserve the unit we need that $t^*(1) = 1$. 
For it to preserve the multiplication we need for $t(([n],h)([n'],h')) = ([t^*(hh')n\phi(h,n')g(h,h')],hh')$
and $t([n],h)t([n'],h') = ([t^*(h)n\phi(h,t^*(h'))\phi(h,n')g(h,h')],hh')$
to be equal. This happens when
\[
    t^*(hh')n\phi(h,n')g(h,h') \sim^{hh'} t^*(h)n\phi(h,t^*(h'))\phi(h,n')g(h,h').
\]
Now multiplying on the left by $n\inv \phi(h,n')\inv g(h,h')\inv$ gives $t^*(hh') \sim^{hh'} t^*(h)\phi(h,t^*(h'))$, 
which is the natural weakening of a crossed homomorphism to our setting. 

\begin{definition}
    A function $t^* \colon H \to N$ is a crossed homomorphism relative to $(E,\phi)$ when 
    \[
        t^*(hh') \sim^{hh'} t^*(h)\phi(h,t^*(h')).
    \]
\end{definition}

We have shown above that crossed homomorphisms give rise to automorphisms of extensions and that every endomorphism arises in this way.
However, it is possible for two crossed homomorphisms to yield the same endomorphism. To remedy this we quotient the set of crossed homomorphisms with the equivalence relation defined by $t^*_1 \sim t^*_2 \iff t^*_1(h) \sim^h t^*_2(h)$. 

\begin{definition}
Let $\mathcal{Z}^1(H,N,E,[\phi])$ denote the set of equivalence classes of crossed homomorphisms. This inherits an abelian group structure from the pointwise multiplication of crossed homomorphisms.
\end{definition}

\begin{theorem}
    If $\Gamma$ is a cosetal extension $\normalext{N}{k}{G}{e}{H}$ in $\mathcal{H}^2(H,N,E,[\phi])$, then there is a bijection between $\End(\Gamma)$ and $\mathcal{Z}^1(H,N,E,[\phi])]$.
    Moreover, this an isomorphism of monoids. 
\end{theorem}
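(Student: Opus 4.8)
The plan is to build the bijection $\Phi\colon \mathcal{Z}^1(H,N,E,[\phi]) \to \End(\Gamma)$ directly from the analysis preceding the statement, which already showed that every endomorphism $f$ of $\Gamma$ has the form $f([n],h) = ([f^*(h)n],h)$ and that $f$ is a homomorphism precisely when $f^*$ is a crossed homomorphism relative to $(E,\phi)$. Since the excerpt's computations were carried out in the model $N \rtimes_{E,\phi}^g H$, I would first invoke the isomorphism of extensions $G \cong N \rtimes_{E,\phi}^g H$ (guaranteed by the correspondence recalled in the Background) to reduce to the case $G = N \rtimes_{E,\phi}^g H$; since $\End$ of isomorphic objects are isomorphic as monoids, nothing is lost. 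Then $\Phi([t^*])$ is defined to be the endomorphism $([n],h)\mapsto([t^*(h)n],h)$.

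The key steps, in order, are: (1) \emph{well-definedness} of $\Phi$ — if $t^*_1 \sim t^*_2$, i.e.\ $t^*_1(h)\sim^h t^*_2(h)$ for all $h$, then $([t^*_1(h)n],h) = ([t^*_2(h)n],h)$ by admissibility axiom (2) of Definition~\ref{def:admissible_indexed_equivalence_relation} applied with left multiplier, so the two induced maps agree; conversely $\Phi$ is injective because if the induced endomorphisms agree then evaluating at $([1],h)$ forces $t^*_1(h)\sim^h t^*_2(h)$; (2) \emph{surjectivity} — exactly the content of \cref{prop:automorphism} and the paragraph after it, namely any $f\in\End(\Gamma)$ gives $f^*$ with $f^*(1)=1$ and the crossed-homomorphism relation, so $f = \Phi([f^*])$; (3) \emph{monoid homomorphism} — one must check $\Phi([s^*])\circ\Phi([t^*]) = \Phi([s^* \ast t^*])$ where $\ast$ is the pointwise product, i.e.\ $(s^*\ast t^*)(h) = s^*(h)\phi(h,t^*(h))$ (this is the product under which crossed homomorphisms form a group, mirroring the classical case), and that $\Phi$ sends the trivial crossed homomorphism $h\mapsto 1$ to $\id_\Gamma$. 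The composite computation is the only one with any content: applying $\Phi([t^*])$ then $\Phi([s^*])$ to $([n],h)$ yields $([s^*(h)\phi(h,t^*(h))\,n\,\cdots],h)$ after using that $\phi$ is (up to $\sim^h$) multiplicative in its second argument (axiom (3) of Definition~\ref{def:compatible_action}), matching $([(s^*\ast t^*)(h)n],h)$; one checks the stray factors cancel exactly as in the derivation of the crossed-homomorphism identity on the previous page.

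I expect the main obstacle to be bookkeeping the $\sim^h$-equivalences carefully in step (3): every identity only holds up to the relation $E$, so I must repeatedly appeal to admissibility axiom (2) (stability of $\sim^h$ under left multiplication) to pass equivalences through the multiplications defining the composite, and to the fact that $\phi$ is well-defined only up to $[\phi]_E$, so in particular $\phi(h,t^*(h))$ is only determined modulo $\sim^h$ — but this is harmless precisely because we have already quotiented the target by that relation. Once these compatibility lemmas are marshalled, the verification that $\Phi$ is an isomorphism of monoids is routine. I would also remark that this shows $\mathcal{Z}^1(H,N,E,[\phi])$ being a \emph{group} forces every endomorphism to be invertible, recovering \cref{prop:automorphism} as a corollary.
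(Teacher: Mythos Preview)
Your overall approach matches the paper's (which presents the argument in the discussion immediately preceding the theorem rather than in a separate proof block): reduce to the model $N \rtimes_{E,\phi}^g H$, observe every endomorphism has the form $([n],h)\mapsto([t^*(h)n],h)$ with $t^*$ a crossed homomorphism, and pass to equivalence classes. Steps (1) and (2) and your closing remark are fine.

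There is, however, a genuine error in step (3). The monoid operation on $\mathcal{Z}^1(H,N,E,[\phi])$ is \emph{pointwise} multiplication in the abelian group $N$, i.e.\ $(s^*\cdot t^*)(h)=s^*(h)t^*(h)$, not the twisted product $s^*(h)\phi(h,t^*(h))$ you propose. The paper says so explicitly in the definition (``inherits an abelian group structure from the pointwise multiplication of crossed homomorphisms''), and indeed pointwise multiplication is exactly what corresponds to composition of the endomorphisms you wrote down:
\[
\Phi([s^*])\bigl(\Phi([t^*])([n],h)\bigr)=\Phi([s^*])([t^*(h)n],h)=([s^*(h)\,t^*(h)\,n],h)=\Phi([s^*t^*])([n],h),
\]
with no $\phi$ appearing at all. (That the pointwise product of two crossed homomorphisms is again a crossed homomorphism does use commutativity of $N$ together with axiom (3) of \cref{def:compatible_action}, but that is a separate closure check, not the composition computation.) Note also that your twisted operation would not even be commutative in general, contradicting the claim that $\mathcal{Z}^1(H,N,E,[\phi])$ is an abelian group. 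So the ``main obstacle'' you anticipate in step (3) does not arise; once the operation is identified correctly, the monoid-homomorphism verification is a one-liner.
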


The same approach allows us to characterise arbitrary morphisms. Let $f$ be a morphism of extensions as in the following diagram.
\begin{center}
   \begin{tikzpicture}[node distance=2.0cm, auto]
    \node (A) {$N$};
    \node (B) [right=1.2cm of A] {$N \rtimes_{E,\phi}^g H$};
    \node (C) [right=1.2cm of B] {$H$};
    \node (D) [below of=A] {$N$};
    \node (E) [below of=B] {$N \rtimes_{E',\phi}^g H$};
    \node (F) [below of=C] {$H$};
    \node (G) [left=0.3cm of A] {$\Gamma\colon$};
    \node (H) [left=0.3cm of D] {$\Psi\colon$};
    \node (blank) [right=0.3cm of C] {};
    \draw[normalTail->] (A) to node {$k_1$} (B);
    \draw[-normalHead] (B) to node {$e_1$} (C);
    \draw[normalTail->] (D) to node {$k_2$} (E);
    \draw[-normalHead] (E) to node {$e_2$} (F);
    \draw[->] (B) to node {$f$} (E);
    \draw[double equal sign distance] (A) to (D);
    \draw[double equal sign distance] (C) to (F);
   \end{tikzpicture}
\end{center}

Then by a similar argument we have that $f([n],h) = ([t^*(h)n],h)$ where $t^* \colon H \to N$ is a crossed homomorphism relative to the admissible $H$-indexed equivalence relation $E'$. Thus, we arrive at the following result.

\begin{theorem}
Let $\Gamma$ and $\Psi$ be cosetal extensions as above. Then $\Hom(\Gamma,\Psi)$ is bijective to $\mathcal{Z}^1(H,N,E',\phi)$ under the assumptions of \cref{prop:existence_of_morphisms_of_cosetal_extensions} (and empty otherwise).
Here an element of $\mathcal{Z}^1(H,N,E',\phi)$ is sent to the composition of the corresponding automorphism on $\Psi$ and the map $\lambda\colon \Gamma \to \Psi$ as defined in \cref{prop:canonical_map_between_extensions}.
In particular, $\lambda$ corresponds to the the identity of $\mathcal{Z}^1(H,N,E',\phi)$.
\end{theorem}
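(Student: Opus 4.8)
The plan is to exhibit an explicit bijection
\[
  \Phi\colon \mathcal{Z}^1(H,N,E',\phi) \longrightarrow \Hom(\Gamma,\Psi), \qquad [t^*]\longmapsto \tau_{t^*}\circ\lambda,
\]
where $\lambda\colon\Gamma\to\Psi$ is the canonical morphism of \cref{prop:canonical_map_between_extensions} and $\tau_{t^*}\in\End(\Psi)$ is the automorphism corresponding to $[t^*]$ under the monoid isomorphism $\End(\Psi)\cong\mathcal{Z}^1(H,N,E',\phi)$ obtained above. First I would treat the degenerate case: if the hypotheses of \cref{prop:existence_of_morphisms_of_cosetal_extensions} fail, then that theorem already gives $\Hom(\Gamma,\Psi)=\varnothing$ and there is nothing to prove; so assume those hypotheses hold, which in particular ensures $E\subseteq E'$ and hence that $\lambda$ is defined.

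Next I would verify that $\Phi$ is well defined: $\tau_{t^*}$ depends only on the class $[t^*]$ by the cited isomorphism, $\lambda$ is a fixed morphism of extensions, and a composite of morphisms of extensions is again one, so $\Phi([t^*])$ indeed lies in $\Hom(\Gamma,\Psi)$. For injectivity, note that $\lambda([n],h)=([n],h)$ is surjective as a map of underlying sets (every $\sim^h_{E'}$-class is the image of a $\sim^h_E$-class since $E\subseteq E'$); hence $\tau_{t^*}\circ\lambda=\tau_{u^*}\circ\lambda$ forces $\tau_{t^*}=\tau_{u^*}$, and then $[t^*]=[u^*]$ by the isomorphism $\End(\Psi)\cong\mathcal{Z}^1(H,N,E',\phi)$. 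For surjectivity I would invoke the computation carried out just before the statement: any $f\in\Hom(\Gamma,\Psi)$ has the form $f([n],h)=([t^*(h)n],h)$ for some crossed homomorphism $t^*$ relative to $E'$, and since $\tau_{t^*}\circ\lambda([n],h)=\tau_{t^*}([n],h)=([t^*(h)n],h)=f([n],h)$ we conclude $f=\Phi([t^*])$. Finally, the identity of $\mathcal{Z}^1(H,N,E',\phi)$ corresponds to $\id_\Psi$ under the monoid isomorphism, so $\Phi$ sends it to $\id_\Psi\circ\lambda=\lambda$, which is the last assertion.

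I do not expect a genuine obstacle here, since every ingredient has been assembled in the preceding results; the plan is essentially bookkeeping. The one step that deserves care is the small gluing computation $\tau_{t^*}\circ\lambda([n],h)=([t^*(h)n],h)$: it is what makes the backward map $[t^*]\mapsto\tau_{t^*}\circ\lambda$ a two-sided inverse of the forward map $f\mapsto[t^*_f]$ (the latter being the passage, established before the statement, from a morphism to its associated crossed homomorphism), and it is the precise point at which the roles of $\lambda$ and of the automorphisms of $\Psi$ are disentangled.
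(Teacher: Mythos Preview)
Your proposal is correct and follows precisely the route the paper intends: the paper itself offers no separate proof, merely noting that ``by a similar argument'' any $f\in\Hom(\Gamma,\Psi)$ has the form $f([n],h)=([t^*(h)n],h)$ for a crossed homomorphism $t^*$ relative to $E'$, and then stating the theorem. Your argument is a careful unpacking of this sketch---verifying well-definedness, injectivity via the surjectivity of $\lambda$, surjectivity via the displayed form of $f$, and the correspondence of $\lambda$ with the identity crossed homomorphism---and introduces no new ideas beyond those already assembled in the preceding results.
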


\begin{remark}
 Composition with an automorphism in the codomain corresponds to translation in the group $\mathcal{Z}^1(H,N,E',\phi)$ by the associated crossed homomorphism.
 A crossed homomorphism from the domain naturally gives one in the codomain and
 composition with an automorphism in the \emph{domain} corresponds to translation by this corresponding crossed homomorphism.
\end{remark}

\section{Parameterisation by actions alone}

In \cite{faul2019characterisation} and \cite{faul2020baer} weakly Schreier and cosetal extensions are characterised in terms of admissible equivalence relations, compatible actions, and in the latter case, cohomology classes. Instead of starting with an admissible equivalence relation and then specifying an action compatible with it, 
it is also possible to start by specifying a candidate action and then choosing a compatible equivalence relation. By \cref{prop:existence_of_morphisms_of_cosetal_extensions} we know that different extensions admit morphisms between them only when they have the same action and cohomology class and so it can be helpful to consider all the compatible equivalence relations corresponding to a given candidate action.
To start we will not require $N$ to be an abelian group.

\begin{proposition}
    Let $N$ and $H$ be monoids and let $\alpha\colon H \times N \to N$ be an arbitrary function.
    The equivalence relations compatible with $\alpha$ are closed under inhabited pointwise intersections.
\end{proposition}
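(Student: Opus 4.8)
The plan is to fix a candidate action $\alpha\colon H \times N \to N$, take an inhabited family $(E_i)_{i \in I}$ of admissible $H$-indexed equivalence relations each compatible with $\alpha$, and show that their pointwise intersection $E = \bigcap_{i} E_i$ (so $n \sim_E^h n'$ iff $n \sim_{E_i}^h n'$ for all $i$) is again an admissible $H$-indexed equivalence relation compatible with $\alpha$. The verification splits into two routine parts: first that $E$ is admissible in the sense of \cref{def:admissible_indexed_equivalence_relation}, and second that $\alpha$ is a compatible action for $E$ in the sense of \cref{def:compatible_action}.

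For admissibility, note that each $\sim_E^h$ is an equivalence relation on $N$ since an intersection of equivalence relations is an equivalence relation (here inhabitedness of $I$ is needed so that the intersection is not the total relation, or rather so that reflexivity is not the only thing forcing the relations — in fact reflexivity and symmetry hold regardless, but we want the intersection to genuinely inherit the axioms). Axiom (1): if $n \sim_E^1 n'$ then $n \sim_{E_i}^1 n'$ for any fixed $i \in I$ (using $I \neq \emptyset$), hence $n = n'$. Axioms (2) and (3) are of the form ``$n \sim^h n'$ implies $P(n,n')$'' where $P$ is itself an instance of the same indexed relation; since $n \sim_E^h n'$ gives $n \sim_{E_i}^h n'$ for every $i$, each $E_i$ yields the conclusion for $E_i$, and intersecting over $i$ gives the conclusion for $E$. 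The compatibility conditions (1)--(6) of \cref{def:compatible_action} are handled identically: conditions (3)--(6) assert relations of the form $u \sim^h v$ with $u,v$ built from $\alpha$, and since each holds in every $E_i$ it holds in $E$; conditions (1) and (2) are implications whose hypothesis and conclusion are both indexed relations, so the same ``holds in each $E_i$, hence in the intersection'' argument applies.

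The only genuine subtlety — and the reason the statement restricts to \emph{inhabited} intersections — is axiom (1) of \cref{def:admissible_indexed_equivalence_relation}: the empty intersection would be the total relation on $N$, for which $n \sim^1 n'$ for all $n,n'$, violating discreteness at $h = 1$ as soon as $|N| > 1$. Every other clause is either preserved by arbitrary intersections or trivially vacuous. So I expect no real obstacle here; the proof is a short, uniform ``check each axiom passes to intersections'' argument, with the one remark that inhabitedness is exactly what is needed to kill the bad $h=1$ behaviour.
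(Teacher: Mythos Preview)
Your proposal is correct and follows essentially the same approach as the paper: the paper observes that conditions (2) and (3) of \cref{def:admissible_indexed_equivalence_relation} and all six compatibility conditions of \cref{def:compatible_action} are implications between instances of the indexed relation and hence closed under arbitrary pointwise intersections, while condition (1) of \cref{def:admissible_indexed_equivalence_relation} is downward closed and so needs only inhabitedness. You spell out exactly this, just in more detail, and correctly identify the sole role of the inhabitedness hypothesis.
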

\begin{proof}
    All of the conditions for $\alpha$ to be compatible in \cref{def:compatible_action} and conditions (2) and (3) in \cref{def:admissible_indexed_equivalence_relation} are implications from one of the equivalence relations to another and so they are clearly closed under arbitrary pointwise intersections. Furthermore, condition (1) of \cref{def:admissible_indexed_equivalence_relation} is clearly downwards closed and hence closed under inhabited pointwise intersections.
\end{proof}

The equivalence relations compatible with a given map $\alpha\colon H \times N \to N$ are not closed under arbitrary meets in general, since it might happen that no such equivalence relation exists at all. For instance, it is shown in \cite{faul2019characterisation} that when $H$ is a group, only true actions are compatible with any equivalence relation.

\begin{definition}
 We say a function $\alpha\colon H \times N \to N$ is \emph{valid} if it is compatible with some $H$-indexed equivalence relation.
\end{definition}

Validity is, in fact, the \emph{only} obstruction to the compatible equivalence relations forming a complete lattice. To show this, we will prove that when the set is nonempty, it has a largest element.
We start by finding an indexed equivalence relation which contains every indexed equivalence relation compatible with $\alpha$.

\begin{proposition}
    Let $\alpha\colon H \times N \to N$ be compatible with an $H$-indexed equivalence relation $E$.
    Then whenever $n \sim_E^h n'$, we have $\forall x, y\in H.\ \left[ xhy = 1 \implies \alpha(x,n) = \alpha(x,n') \right]$. 
\end{proposition}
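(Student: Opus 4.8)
The plan is to show that if $n \sim_E^h n'$ and $xhy = 1$ in $H$, then the compatibility axioms for $(E,\alpha)$ force $\alpha(x,n) = \alpha(x,n')$. The key observation is that $xhy = 1$ means that $x$ and $hy$ are two-sided inverses of each other in the monoid $H$, so both $x$ and $hy$ are units and in particular the equivalence relation $\sim^1$ is available to pin elements down exactly, by condition (1) of \cref{def:admissible_indexed_equivalence_relation}.

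First I would push the relation $n \sim_E^h n'$ forward under $\alpha(x,-)$. By condition (2) of \cref{def:compatible_action} applied with the element $x \in H$, from $n \sim^h n'$ we obtain $\alpha(x,n) \sim^{xh} \alpha(x,n')$. Next I would use condition (3) of \cref{def:admissible_indexed_equivalence_relation} (upward closure of the index): since $xh \cdot y = xhy = 1$, we may post-multiply the index by $y$ to get $\alpha(x,n) \sim^{(xh)y} \alpha(x,n')$, i.e. $\alpha(x,n) \sim^{1} \alpha(x,n')$. Finally, condition (1) of \cref{def:admissible_indexed_equivalence_relation} says $\sim^1$ is the diagonal, so $\alpha(x,n) = \alpha(x,n')$, which is exactly the claim. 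Since $x,y$ were arbitrary subject to $xhy = 1$, the universally quantified statement follows.

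This is essentially a three-line argument, so I do not anticipate a genuine obstacle; the only thing to be careful about is the order of the two steps. One must apply the index-multiplication axiom \emph{after} applying $\alpha(x,-)$, because the index has changed from $h$ to $xh$, and it is the product $xh$ that gets multiplied on the right by $y$ to yield $1$ — trying to absorb $y$ first would require knowing something like $hy$ is a unit acting on $h$, which is not directly one of the axioms. It is also worth noting explicitly that we do not need $N$ to be a group here, nor do we use the multiplicativity-type axioms (3)–(6) of \cref{def:compatible_action}: only the two "congruence"-style axioms, namely left-translation invariance of $\sim^h$ and $\alpha$-equivariance, together with the two structural axioms on the index (triviality at $1$ and upward closure in $H$).
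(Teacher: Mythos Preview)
Your proof is correct and follows essentially the same idea as the paper: push the relation to index $1$ using condition~(2) of \cref{def:compatible_action} and condition~(3) of \cref{def:admissible_indexed_equivalence_relation}, then conclude via condition~(1). The only difference is the order: the paper first post-multiplies the index by $y$ to get $n \sim^{hy} n'$ and then applies $\alpha(x,-)$ to reach $\sim^{x(hy)} = \sim^1$, whereas you apply $\alpha(x,-)$ first and then post-multiply by $y$; so your caution that ``trying to absorb $y$ first'' would not work is unfounded --- both orderings go through.
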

\begin{proof}
    Suppose that $n \sim_E^h n'$ and consider $x,y \in H$ such that $xhy = 1$. 
    Since $n \sim_E^h n'$, we have that $n \sim_E^{hy} n'$. Now applying the second compatibility condition, we obtain $\alpha(x,n) \sim_E^{xhy} \alpha(x,n')$. But $xhy = 1$ and so $\alpha(x,n) = \alpha(x,n')$ as required.
\end{proof}

This suggests the following definition.

\begin{definition}
    Given a function $\alpha\colon H \times N \to N$, we call the associated $H$-indexed equivalence relation $E_\alpha$ defined by
    \[
        n \sim_\alpha^h n' \iff \forall x, y\in H.\ \left[ xhy = 1 \implies \alpha(x,n) = \alpha(x,n')\right]
    \]
    the associated \emph{coarse equivalence relation}.
\end{definition}
Note that this is indeed an $H$-indexed equivalence relation.

\begin{theorem}
    If $\alpha$ is compatible with any $H$-indexed equivalence relation $E$, then it is compatible with the coarse equivalence relation $E_\alpha$.
\end{theorem}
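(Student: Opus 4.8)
The plan is to show that $E_\alpha$ satisfies the three conditions of \cref{def:admissible_indexed_equivalence_relation} together with the six conditions of \cref{def:compatible_action}, given that $\alpha$ is compatible with some fixed $H$-indexed equivalence relation $E$. Throughout, the key technical tool is the previous proposition: whenever $n \sim_E^h n'$ and $xhy = 1$ in $H$, then $\alpha(x,n) = \alpha(x,n')$. Combined with the fact that each of the compatibility conditions of \cref{def:compatible_action} and conditions (2) and (3) of \cref{def:admissible_indexed_equivalence_relation} holds for $E$, this lets us transfer identities from $E$ to $E_\alpha$ whenever we can witness an element's index as a two-sided factor of $1$.

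First I would verify admissibility. Condition (1), that $n \sim_\alpha^1 n'$ implies $n = n'$: taking $x = y = 1$ (so that $xhy = 1\cdot 1\cdot 1 = 1$) in the defining formula for $\sim_\alpha^1$ forces $\alpha(1,n) = \alpha(1,n')$; but since $\alpha$ is compatible with $E$, condition (6) gives $\alpha(1,n) \sim_E^1 n$ and $\alpha(1,n') \sim_E^1 n'$, so $n \sim_E^1 n'$, hence $n = n'$ by admissibility of $E$. Condition (2), $n \sim_\alpha^h n' \implies zn \sim_\alpha^h zn'$: suppose $xhy = 1$; from $n \sim_\alpha^h n'$ we do not directly know $zn \sim_E^\bullet zn'$, so instead I argue at the level of the defining formula — I need $\alpha(x,zn) = \alpha(x,zn')$ whenever $xhy = 1$. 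Here condition (1) of \cref{def:compatible_action} for $E$ reads $m \sim_E^g m' \implies m\alpha(g,w) \sim_E^g m'\alpha(g,w)$, which is about right-multiplication, whereas I want a statement about $\alpha(x, zn)$. The cleaner route is to note that $\alpha(x, zn) \sim_E^{xhy}\alpha(x,z)\cdot\alpha(x,n)$ is \emph{not} one of the axioms either; rather, I expect to need that $zn \sim_{E_\alpha}^h zn'$ can be checked by combining the left-translation axiom of $E_\alpha$ with the observation that $E_\alpha$ is defined purely via values of $\alpha$ at $(x,-)$. Concretely: $\alpha(x,zn)$ and $\alpha(x,zn')$ — if we knew $zn \sim_E^{h}zn'$ we would be done by the proposition, but we must instead use that $n\sim_\alpha^h n'$, which only controls $\alpha(x,n),\alpha(x,n')$. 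This is the delicate point and I address it below.

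For the compatibility conditions (1)–(6) of \cref{def:compatible_action} relative to $E_\alpha$, the strategy is uniform: each is an assertion of the form "$u \sim_\alpha^g v$" for suitable $u,v,g$ built from $\alpha$, and to prove $u \sim_\alpha^g v$ I must show $\alpha(x,u) = \alpha(x,v)$ for all $x,y$ with $xgy = 1$. The same identity relative to $E$ is available by hypothesis; applying the proposition with $h := g$ (noting $xgy=1$) converts the $E$-relation into literal equality $\alpha(x,u)=\alpha(x,v)$, which is exactly what is needed — provided the $E$-version has the \emph{same} index $g$. One checks that conditions (3), (4), (5), (6) do have matching indices ($h$, $hh'$, $h$, $1$ respectively), so these go through immediately. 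Conditions (1) and (2) of \cref{def:compatible_action} and conditions (2), (3) of admissibility are implications between relations; for each, I assume the hypothesis relation holds for $E_\alpha$, unwind it to an equality of $\alpha$-values under the constraint $xhy = 1$, apply the corresponding $E_\alpha$-friendly manipulation, and rewrap.

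The main obstacle I anticipate is precisely the left-translation and the $\alpha$-in-first-slot transfers (admissibility condition (2), and conditions (1)–(2) of \cref{def:compatible_action}), because $E_\alpha$ is defined through $\alpha(x,-)$ with $x$ ranging over \emph{left} factors of $1$, and the hypotheses about $E$ speak of $\alpha$ applied to a \emph{product}, not to each factor separately. The resolution should be to exploit that when $xhy = 1$ one also has, e.g., $(x z')(z'^{-1} h y) $-type rewritings unavailable in a general monoid — so instead I expect to reduce everything to the already-established fact that $E \subseteq E_\alpha$ in spirit via the proposition, chasing the defining quantifier carefully and using conditions (2), (3), (4) of \cref{def:compatible_action} for $E$ to rewrite $\alpha(x, zn)$ in terms of $\alpha$ applied to things that $E_\alpha$ controls. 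If a direct chase stalls, the fallback is to observe that $E_\alpha$ is the largest relation satisfying the proposition's conclusion and argue by a closure/universal-property style maximality argument rather than axiom-by-axiom, but I expect the direct verification to succeed.
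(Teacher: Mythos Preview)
Your overall architecture matches the paper's proof: verify admissibility of $E_\alpha$, observe that conditions (3)--(6) of \cref{def:compatible_action} transfer automatically because $E \subseteq E_\alpha$ (your unwinding via the proposition is exactly this containment), and treat the implicational conditions separately. Where you falter is precisely at the ``delicate point'' you flag but never resolve.

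You write that ``$\alpha(x, zn) \sim_E^{xhy}\alpha(x,z)\cdot\alpha(x,n)$ is \emph{not} one of the axioms either'' --- but it is an immediate consequence of them. Condition (3) of \cref{def:compatible_action} for $E$ gives $\alpha(x, zn) \sim_E^{x} \alpha(x,z)\alpha(x,n)$, indexed at $x$ rather than $xhy$; then condition (3) of \cref{def:admissible_indexed_equivalence_relation} for $E$ lets you right-multiply the index by $hy$, landing at $\sim_E^{xhy} = \sim_E^1$, which forces literal equality. This single observation is the missing idea, and it dispatches all three of your problematic cases uniformly. For admissibility~(2) and compatibility~(1) of $E_\alpha$: use condition~(3) of \cref{def:compatible_action} at index $x$ to split $\alpha(x,-)$ over the product, substitute the equality $\alpha(x,n) = \alpha(x,n')$ coming from $n \sim_\alpha^h n'$ and $xhy=1$, recombine, and push the index from $x$ to $x(hy) = 1$. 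For compatibility~(2): use condition~(4) at index $xb$ to get $\alpha(x,\alpha(b,n)) \sim_E^{xb} \alpha(xb,n)$, substitute $\alpha(xb,n) = \alpha(xb,n')$ (available since $(xb)hy = 1$), and push the index from $xb$ to $(xb)(hy) = 1$. Your fallback maximality argument is unnecessary.
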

\begin{proof}
    Let us begin by showing that $E_\alpha$ is admissible. 
    Suppose $n \sim^1_\alpha n'$ and observe that $x\cdot 1\cdot y = 1$ when $x = 1 = y$. Thus, $\alpha(1,n) = \alpha (1,n')$. Since $\alpha$ is compatible with $E$, we get that $n \sim^1_E n'$ and hence $n = n'$.
    
    Next we must show that if $n \sim_\alpha^h n'$ then $an \sim_\alpha^h an'$. Now suppose $n \sim_\alpha^h n'$ and $xhy = 1$. We know that $\alpha(x,n) = \alpha(x,n')$. Consider
    \begin{align*}
        \alpha(x,an)    &\sim_E^x \alpha(x,a)\alpha(x,n) \\
                        &\sim_E^x \alpha(x,a)\alpha(x,n') \\
                        &\sim_E^x \alpha(x,an').
    \end{align*}
    It follows that $\alpha(x,an) \sim_E^{xhy} \alpha(x,an')$. But $xhy = 1$ and so we find $\alpha(x,an) = \alpha(x,an')$.
    
    Finally, we must show that if $n \sim_\alpha^h n'$ then $n \sim^hh' n'$. Suppose $n \sim_\alpha^h n'$ and $xhh'y' = 1$. Then for $y = h'y'$ we have that $xhy = 1$. Thus, by assumption we get that $\alpha(x,n) = \alpha(x,n')$ and hence $E_\alpha$ is admissible.
    
    Now we show that $\alpha$ is compatible with $E_\alpha$. Notice that because $E \subseteq E_\alpha$ and $\alpha$ is compatible with $E$, we immediately have that conditions (3)--(6) hold. So we need only check conditions (1) and (2).
    
    To prove condition (1) we must show that if $n \sim_\alpha^h n'$ then $n\alpha(h,a) \sim_\alpha^h n'\alpha(h,a)$ for all $a \in N$. Suppose $xhy = 1$. By assumption we have $\alpha(x,n) = \alpha(x,n')$. We must show that $\alpha(x,n\alpha(h,a)) = \alpha(x, n'\alpha(h,a))$. 
    With respect to $E$ we have
    \begin{align*}
        \alpha(x,n\alpha(h,a))  &\sim_E^x \alpha(x,n) \cdot \alpha(x,\alpha(h,a)) \\
                                &\sim_E^x \alpha(x,n') \cdot \alpha(x,\alpha(h,a)) \\
                                &\sim_E^x \alpha(x,n'\alpha(h,a)).
    \end{align*}
    As above, it follows that $\alpha(x,n\alpha(h,a)) = \alpha(x,n'\alpha(h,a))$ since $x(hy)=1$.
    
    Finally, for condition (2) we must show that if $n \sim_\alpha^h n'$ then $\alpha(b,n) \sim_\alpha^{bh} \alpha(b,n')$ for all $b \in H$. Suppose $n \sim_\alpha^h n'$ and $xbhy = 1$. We must show $\alpha(x,\alpha(b,n)) = \alpha(x,\alpha(b,n'))$. Notice that by assumption $\alpha(xb,n) = \alpha(xb,n')$.
    Now consider
    \begin{align*}
        \alpha(x,\alpha(b,n))   &\sim_E^{xb} \alpha(xb,n) \\
                                &\sim_E^{xb} \alpha(xb,n') \\
                                &\sim_E^{xb} \alpha(x,\alpha(b,n')).
    \end{align*}
    Because $xb(hy) = 1$, we then find $\alpha(x,\alpha(b,n)) = \alpha(x,\alpha(b,n'))$, as required.
\end{proof}

\begin{corollary}\label{prop:compatible_quotients_complete_lattice}
 The equivalence relations compatible with any valid map $\alpha\colon H \times N \to N$ form a complete lattice.
\end{corollary}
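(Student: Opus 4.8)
The plan is to assemble the corollary from the two results just proved together with one standard order-theoretic fact. Write $\mathcal{C}$ for the collection of $H$-indexed equivalence relations compatible with $\alpha$, ordered by inclusion, so that the ambient meet operation on $H$-indexed equivalence relations is pointwise intersection. First I would observe that since $\alpha$ is valid, $\mathcal{C}$ is nonempty, and hence the preceding theorem applies: the coarse equivalence relation $E_\alpha$ belongs to $\mathcal{C}$. Moreover, the proposition preceding the definition of $E_\alpha$ shows that $E \subseteq E_\alpha$ for every $E \in \mathcal{C}$, so $E_\alpha$ is the top element of $\mathcal{C}$.

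Next I would invoke the proposition that $\mathcal{C}$ is closed under inhabited pointwise intersections. Since a pointwise intersection of $H$-indexed equivalence relations is again an $H$-indexed equivalence relation, this says exactly that every nonempty family in $\mathcal{C}$ has an infimum in $\mathcal{C}$, computed as the pointwise intersection (which is manifestly the greatest lower bound once we know it lies in $\mathcal{C}$). Together with the top element $E_\alpha$, which serves as the infimum of the empty family, we conclude that every subset of $\mathcal{C}$ — including the empty one — has an infimum in $\mathcal{C}$.

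Finally, I would appeal to the standard fact that a poset in which every subset has an infimum is a complete lattice, with arbitrary joins recovered as the infimum of the (nonempty, since it contains $E_\alpha$) set of upper bounds. This finishes the argument. There is no genuine obstacle here; the only points deserving a moment's care are to check that meets taken within $\mathcal{C}$ coincide with the ambient pointwise intersections — so that the closure proposition really does provide infima internal to $\mathcal{C}$ — and to handle the empty meet separately via $E_\alpha$, rather than trying to read it off the closure proposition (which only covers inhabited families).
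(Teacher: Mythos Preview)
Your proposal is correct and is precisely the argument the paper has in mind: the corollary is stated without proof, but it is meant to follow exactly by combining the closure of $\mathcal{C}$ under inhabited intersections with the existence of the top element $E_\alpha$ (supplied by the preceding theorem and proposition), and then invoking the standard fact that a poset with all infima is a complete lattice. Your two points of care --- that the ambient meets really are infima in $\mathcal{C}$ and that the empty meet must be handled via $E_\alpha$ --- are exactly the subtleties the paper's earlier remarks about the nonexistence of a largest element in general are flagging.
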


The existence of the coarsest compatible equivalence relation was proved in \cite{faul2019characterisation} under the assumption that whenever $xh \in H$ is right invertible, so is $h$. In this case, every valid $\alpha$ has the same coarsest equivalence relation given by $n \sim^h n' \iff \forall y\in H.\ \left[ hy = 1 \implies n = n' \right]$. The following example gives a coarsest compatible equivalence relation, which does not satisfy these conditions.

\begin{example}
 Let $H$ be the bicyclic monoid $B = \langle p, q \mid pq = 1 \rangle$, $N$ the group $\Z^\omega$ of integer sequences under addition and $\alpha$ the true action $\alpha\colon H \times N \to N$ defined by \[\alpha(q, s)_n = \begin{cases} s_{n-1} & n > 0 \\ 0 & n = 0 \end{cases}\] and $\alpha(p, s)_n = s_{n+1}$.
 
 We can show that $x(q^a p^b)y = 1$ if and only if $x = p^{a+i}$ and $y = q^{b+i}$ for some $i \ge 0$
 and hence the coarse equivalence relation is given by $s \sim_\alpha^{q^a p^b} s' \iff \forall n \ge a.\ s_n = s'_n$.
 
 The resulting weak semidirect product can be expressed as the set of pairs of the form $(s, q^a p^b)$ where $s\colon \N \cap [a,\infty) \to \Z$ with unit $(0, 1)$ and multiplication given by
 \[
  (s, q^a p^b) \cdot (s', q^{a'} p^{b'}) = (n \mapsto s_n + s'_{n+b-a}, q^{a+a'-\min(b,a')}p^{b+b'-\min(b,a')}). 
 \]
 The kernel sends $s$ to $(s,1)$ and the cokernel sends $(s,x)$ to $x$.
\end{example}

By \cref{prop:compatible_quotients_complete_lattice} the equivalence relations compatible with a valid map $\alpha\colon H \times N \to N$ are closed under \emph{joins}. These correspond to meets in the order of the associated quotients. In \cite{faul2019artin} we showed that for Artin glueings of frames this meet operation can be interpreted as a kind of Baer sum.
Of course, there is a different notion of Baer sum for cosetal extensions with abelian kernel, in which case the equivalence relation is fixed beforehand.
We might attempt to gain some insight into the interaction of the equivalence relations and cohomology classes by combining them into a single structure. From here on we again assume $N$ is always an abelian group.

\begin{definition}
 Let $N$ be an abelian group and fix a valid action $\phi\colon H \times N \to N$.
 We define $\widetilde{H}^2_\phi(H,N)$ to be the set of pairs $(E,[g])$ where $E$ is an $H$-indexed equivalence relation compatible with $\alpha$ and $[g] \in \mathcal{H}^2(H, N, E, [\phi])$.
 
 We define an operation $(E_1, [g_1]) + (E_2, [g_2]) = (E_1 \vee E_2, \ell_1([g_1]) + \ell_2([g_2]))$
 on $\widetilde{H}^2_\phi(H,N)$ where the maps $\ell_{1,2}\colon \mathcal{H}^2(H,N,E_{1,2},[\phi]) \to \mathcal{H}^2(H,N,E_1 \vee E_2,[\phi])$ are defined as in \cref{prop:grouphomo}
 and we define a constant $0 = (\bot, 0) \in \widetilde{H}_\phi(H,N)$ where $\bot$ is the finest equivalence relation compatible with $\phi$.
\end{definition}

\begin{theorem}
 The algebra $\widetilde{\mathcal{H}}^2_\phi(H,N)$ is an inverse monoid where $(E, [-g])$ is the inverse of $(E,[g])$.
\end{theorem}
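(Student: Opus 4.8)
The plan is to verify the three defining properties of an inverse monoid directly: that $\widetilde{\mathcal{H}}^2_\phi(H,N)$ is a commutative monoid under $+$, that every element has a (two-sided) inverse in the sense that $x + x' + x = x$ and $x' + x + x' = x'$, and that idempotents commute (which in the commutative case is automatic). Since we are claiming it is an \emph{inverse monoid} rather than a group, the first thing I would pin down is exactly why it fails to be a group: $E_1 \vee E_2$ is generally strictly coarser than either $E_i$, so $+$ genuinely collapses information and no subtraction recovers it. The candidate inverse $(E,[-g])$ keeps the \emph{same} equivalence relation $E$, so $(E,[g]) + (E,[-g]) = (E \vee E, \ell([g]) + \ell([-g])) = (E, [g] + [-g]) = (E, 0_E)$, using that $\ell$ is the identity when $E_1 = E_2 = E$ and that $\mathcal{H}^2(H,N,E,[\phi])$ is an abelian group. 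So the key idempotents are the elements $(E, 0_E)$, one for each compatible $E$.

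The main verification is associativity and commutativity of $+$. Commutativity is immediate since $\vee$ is commutative and addition in the target cohomology group is abelian, once one checks $\ell_1([g_1]) + \ell_2([g_2])$ is symmetric in the two arguments — which it visibly is. For associativity I would observe that $\vee$ is associative on the complete lattice of compatible equivalence relations (\cref{prop:compatible_quotients_complete_lattice}), so both $(E_1 + E_2) + E_3$ and $E_1 + (E_2 + E_3)$ land at $E_1 \vee E_2 \vee E_3$ in the first coordinate; for the second coordinate one uses that the maps $\ell$ are functorial (the functor $L_{H,N}$ of the excerpt), so that $\ell_{E_1 \vee E_2 \to E_1 \vee E_2 \vee E_3} \circ \ell_{E_1 \to E_1 \vee E_2} = \ell_{E_1 \to E_1 \vee E_2 \vee E_3}$, and similarly for the other two summands; adding the three images of $[g_1], [g_2], [g_3]$ in $\mathcal{H}^2(H,N, E_1 \vee E_2 \vee E_3, [\phi])$ gives the same element regardless of bracketing. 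That $0 = (\bot, 0)$ is a unit follows because $\bot \vee E = E$ and $\ell_{\bot \to E}(0) = 0$, so $(\bot, 0) + (E,[g]) = (E, [g])$.

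Having the commutative monoid structure, the inverse law reduces to showing $(E,[g]) + (E,[-g]) + (E,[g]) = (E,[g])$, and by commutativity and the idempotent computed above this is $(E,[g]) + (E, 0_E) = (E \vee E, \ell_{E\to E}([g]) + 0) = (E,[g])$, which holds; the other inverse identity is symmetric. Finally, idempotents in a commutative monoid automatically commute, so the regularity-plus-commuting-idempotents characterisation of inverse monoids is satisfied, and uniqueness of inverses follows. I expect the only genuinely delicate point to be the coherence of the maps $\ell$ under the lattice operation — specifically that the composite $\mathcal{H}^2(H,N,E_i,[\phi]) \to \mathcal{H}^2(H,N,E_i \vee E_j,[\phi]) \to \mathcal{H}^2(H,N,E_1\vee E_2\vee E_3,[\phi])$ really is the single map $\ell$ for the inclusion $E_i \subseteq E_1 \vee E_2 \vee E_3$. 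This is exactly functoriality of $L_{H,N}$ established earlier, since $\ell$ on a class $[g]_E$ is always just "reinterpret the representative $g$", so any two such composites agree on the nose; once this is noted, everything else is routine.
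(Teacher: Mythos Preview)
Your proposal is correct and follows essentially the same approach as the paper, which simply states that ``the axioms are all routine calculations'' without elaboration. You have filled in precisely those routine details: associativity via functoriality of $L_{H,N}$ and the lattice structure, commutativity from the abelian target, the unit computation, the regularity identity $(E,[g]) + (E,[-g]) + (E,[g]) = (E,[g])$, and the automatic commutation of idempotents in a commutative monoid.
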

\begin{proof}
 The axioms are all routine calculations.
\end{proof}

\begin{remark}
The addition of this monoid appears to be related to the notion of Baer sum considered in \cite{stepp1971semigroup, fulp1971structure} in the special case of central extensions (which are precisely the cosetal extensions where the candidate action $\phi$ is trivial).
\end{remark}

This inverse monoid allows us to understand the relationship between all cosetal extensions of $H$ by $N$ with a given fixed valid action $\phi$. Of course, the choice of $\phi$ for a given extension is not unique, since it is only defined up to a quotient and so, unlike the cohomology groups $\mathcal{H}^2(H, N, E, [\phi])$, the objects $\widetilde{\mathcal{H}}^2_\phi(H,N)$ no longer partition the extensions,
though if the sum of two elements is taken in different monoids, the results will be compatible.

Furthermore, suppose we are given extensions with compatible pairs satisfying $(E_1, \phi_1), (E_2, \phi_2) \le (E_3, \phi_3)$, but where $\phi_1$ and $\phi_2$ are distinct with respect to $E_1$ and $E_2$.
In this case, we could analyse the relationship between the first and third extensions in $\widetilde{\mathcal{H}}^2_{\phi_1}(H,N)$ and between the second and the third in $\widetilde{\mathcal{H}}^2_{\phi_2}(H,N)$,
but there is no inverse monoid that would allow us to analyse all three.

One way to address this problem is to define the following \emph{category}.
\begin{definition}
 We define a category $\widetilde{\mathcal{H}}^2(H, N)$ whose objects are given by valid actions of $H$ on $N$ and where the morphisms from $\phi$ to $\phi'$ are given by pairs of the form $(E, [g])$ where $E$ is an $H$-indexed equivalence relation compatible with $\phi$ and $\phi'$ and with respect to which $\phi$ and $\phi'$ are equivalent and $[g] \in \mathcal{H}^2(H, N, E, [\phi])$. Composition is given by multiplication in $\widetilde{\mathcal{H}}^2_\phi(H, N)$.
\end{definition}

The inverse monoids $\widetilde{\mathcal{H}}^2_\phi(H,N)$ are then simply the endomorphism monoids in this category, but it also allows us to move from one of these inverse monoids to another as necessary. The category $\widetilde{\mathcal{H}}^2(H, N)$ is an example of what is known as an \emph{inverse category}.

\begin{definition}
 An inverse category is a category for which every morphism $f\colon X \to Y$ has a unique `inverse' $g\colon Y \to X$ such that $fgf = f$ and $gfg = g$.
\end{definition}

Any inverse category can be encoded as a certain kind of ordered groupoid (see \cite{deWolf2017}).
Applying this construction in our case, we obtain a category whose objects are compatible pairs $(E, \phi)$ and where the homset $\Hom((E, \phi), (E', \phi'))$ is given by $\mathcal{H}^2(H, N, E, [\phi])$ when $E = E'$ and $\phi \sim_E \phi'$ and is empty otherwise. Composition is given by multiplication in the obvious way. The order on objects is defined by $(E_1, \phi_1) \preceq (E_2, \phi_2)$ if and only if $E_1 \supseteq E_2$ and $\phi_1 = \phi_2$ and the order on morphisms is defined by $[g_1] \preceq [g_2]$ for $[g_{1,2}]\colon (E_{1,2}, \phi_{1,2}) \to (E_{1,2}, \phi'_{1,2})$
if and only if $E_1 \supseteq E_2$, $\phi_1 = \phi_2$, $\phi'_1 = \phi'_2$ and $[g_1] = \ell([g_2])$.

This order is somewhat too strict, since it distinguishes between compatible pairs with equivalent actions. This motivates the following definition.

\begin{definition}
 The ordered groupoid $\widehat{\mathcal{H}}^2(H, N)$ has compatible pairs $(E, [\phi])$ as objects with the reverse of the usual order. Similarly to the groupoid above, the morphisms from $(E,[\phi])$ to $(E',[\phi'])$ exist when $E = E'$ and $\phi$ is equivalent to $\phi'$ and are given by elements of $\mathcal{H}^2(H, N, E, [\phi])$. Composition is given in the obvious way and the morphisms can be ordered in an analogous way to above.
\end{definition}

Alternatively, we can apply the Grothendieck construction (see \cite[B.1.3.1]{elephant1} for details) to the functor $L_{H,N}\colon \WAct(H,N) \to \Ab$ from \cref{sec:the_functor_L} (composed with the inclusion $\Ab \hookrightarrow \Cat$) to obtain a category ${\int} L_{H,N}$ consisting of compatible pairs $(E, [\phi])$ as objects and with morphisms from $(E_1,[\phi])$ to $(E_2,[\phi])$ given by elements of $\mathcal{H}^2(H, N, E_2, [\phi])$ for $E_1 \subseteq E_2$.
The underlying groupoid of $\widehat{\mathcal{H}}^2(H, N)$ is then simply the \emph{core} (i.e.\ the groupoid of invertible morphisms) of ${\int} L_{H,N}$ equipped with the order induced by the reverse of the order on $\WAct(H,N)$.

The functoriality of these cohomology groups, inverse monoids and categories with respect to $H$ and $N$ will be explored in a later paper.

\bibliographystyle{abbrv}
\bibliography{bibliography}

\end{document}